\newtheorem{thm}{Theorem}
\newtheorem{cor}{Corollary}
\newtheorem{lem}{Lemma}
\newcommand{\expect}[1]{\mathbb{E}\left\{#1\right\}}
\newcommand{\defequiv}{\mbox{\raisebox{-.3ex}{$\overset{\vartriangle}{=}$}}}
\newcommand{\bv}[1]{{\boldsymbol{#1} }}
\newcommand{\script}[1]{{{\cal{#1} }}}
\title{Dynamic Product Assembly and Inventory Control for Maximum Profit}
\author{Michael J. Neely and Longbo Huang}
\begin{abstract}
We consider a manufacturing plant that purchases raw materials for product
assembly and then sells the final products to customers.  There are $M$ types of raw materials
and $K$ types of products, and each product uses a certain subset of raw materials
for assembly.  The plant operates
in slotted time, and every slot it makes decisions about re-stocking materials and
pricing the existing products in reaction to (possibly time-varying) 
material costs and consumer demands.    
We develop a dynamic purchasing and pricing policy  that yields
time average profit within $\epsilon$ of optimality, for any given $\epsilon>0$,
with a worst case storage buffer requirement that is 
$O(1/\epsilon)$.   The policy can be implemented easily for large $M$, $K$,
yields fast convergence times, and is robust to non-ergodic system dynamics. 
\end{abstract}
\keywords{Maximum Revenue, Pricing, Queueing Analysis, Stochastic Optimization}
\begin{document}

\begin{bottomstuff} 
Michael J. Neely$\dagger$ and Longbo Huang$\star$ are with the  
Electrical Engineering Department 
at the University of Southern California, Los Angeles, 
CA \{$\dagger$web: http://www-rcf.usc.edu/$\sim$mjneely\}, \{$\star$web: http://www-scf.usc.edu/$\sim$longbohu\}).

This material is supported in part  by one or more of
the following: the DARPA IT-MANET program
grant W911NF-07-0028,  the NSF Career grant CCF-0747525.
\end{bottomstuff}

\maketitle

\section{Introduction}

This paper considers the problem of maximizing time average profit at a product
assembly plant.  The plant  manages the purchasing, 
assembly, and pricing of $M$ types of raw materials and $K$ types of 
products.  Specifically, the plant maintains a storage buffer  
for each of the $M$ materials, and can assemble each product from some specific
combination of materials.  The system operates in slotted time with 
normalized slots $t \in\{0, 1, 2, \ldots\}$.  Every slot, the plant makes decisions about
purchasing new raw materials and pricing
the $K$ products for sale to the consumer.  This is done in reaction to material
costs and consumer demand functions that are known on each slot but
can change randomly from slot to slot according to a stationary 
process with a  possibly unknown
probability distribution.

It is well known that the problem of maximizing time average profit in such a system 
can be treated using dynamic programming and  Markov decision theory.   A textbook example 
of this approach  for  a single product (single queue) problem is given in \cite{bertsekas-dp},
where inventory storage costs are also considered. 
However, such approaches may be prohibitively complex for problems with large dimension, 
as the state space grows exponentially with the number of queues. 
Further, these techniques require knowledge of the probabilities  that govern
purchasing costs and consumer demand functions.   Case studies of multi-dimensional 
inventory control are treated in \cite{ndp-inventory} using a 
lower complexity neuro-dynamic programming framework, 
which approximates the optimal value function used in traditional dynamic programming.
Such algorithms fine-tune the parameters of the approximation by  either 
offline simulations or online feedback (see also \cite{bertsekas-neural}\cite{approx-dp}). 

In this paper, we consider a different approach that does not attempt to approximate
dynamic programming.    Our  algorithm
reacts to the current system state and does not require knowledge of the probabilities that
affect future states. 
Under mild ergodicity assumptions on the material supply and consumer demand processes, 
we show that the algorithm can push time average profit to  
within $\epsilon$ of optimality,  for any arbitrarily small value
$\epsilon>0$.  This can be achieved by finite storage buffers of size
$cT_{\epsilon}/\epsilon$, where $c$ is a coefficient that is polynomial in $K$ and $M$, 
and $T_{\epsilon}$ is a constant that depends on the ``mixing time'' of the processes. 
In the special case when these processes are i.i.d. over slots, we have $T_{\epsilon} = 1$ for
all $\epsilon>0$, and so the buffers are size $O(1/\epsilon)$.\footnote{If the material supply and 
consumer demand processes are modulated by finite state ergodic Markov chains, then $T_{\epsilon} = O(\log(1/\epsilon))$ and so the buffers are size
$O((1/\epsilon)\log(1/\epsilon))$.} 
The algorithm can be implemented in real time even for problems with large dimension (i.e., large $K$ and $M$). 
 Thus, our framework circumvents the ``curse of dimensionality'' problems
associated with dynamic programming.  This is because we are not asking the same question that could be 
asked by dynamic programming approaches:  Rather than attempting to maximize profit subject to finite storage
buffers, we attempt to reach the more difficult target of pushing profit arbitrarily close to the maximum
that can be achieved in systems with \emph{infinite buffer space}.  We can approach this
optimality with finite buffers of size $O(1/\epsilon)$, although this may not be the optimal buffer size 
tradeoff (see \cite{neely-energy-delay-it}\cite{neely-utility-delay-jsac} for tradeoff-optimal algorithms
in a communication network).    A dynamic program might be able to achieve the same profit with smaller buffers, but would
contend with curse of dimensionality issues. 

Prior work on inventory control  with system models similar to our
own is found  in \cite{pricing-short-life-cycles} \cite{assemble-to-order-dp} \cite{assemble-to-order-plambeck06} 
and references therein.  Work in \cite{pricing-short-life-cycles} considers 
a single-dimensional inventory problem where a fixed number of products are  sold over 
a finite horizon with a constant but unknown customer arrival rate.  A set of coupled differential
equations are derived for the optimal policy using Markov decision theory.  Work 
in \cite{assemble-to-order-dp} provides structural results for multi-dimensional inventory
problems with product assembly, again using Markov decision theory, and obtains 
numerical results for a two-dimensional system.  A multi-dimensional product assembly 
problem is treated in \cite{assemble-to-order-plambeck06} for stochastic customer arrivals
with  fixed and known rates.  The complexity issue is treated by considering a large volume
limit and using results of heavy traffic theory.  The work in \cite{assemble-to-order-plambeck06} also
considers joint optimal price decisions, but chooses all prices at time zero and holds them 
constant for all time thereafter. 

Our analysis uses the ``drift-plus-penalty'' 
framework of stochastic network optimization 
developed for queueing networks 
in \cite{now}\cite{neely-fairness-infocom05}\cite{neely-energy-it}.  Our problem 
is most similar to the work 
in \cite{jiang-processing}, which
uses this framework to address \emph{processing networks} that queue components
that must be combined with other components.   The work in \cite{jiang-processing} 
treats multi-hop networks and maximizes throughput and throughput-utility in these
systems using a \emph{deficit max-weight} algorithm that uses ``deficit queues''  to 
keep track of the deficit created when a component cannot be processed due to a missing
part.   Our paper does not consider a multi-hop network, but has similar challenges when 
we do not have enough inventory to build a desired product.  Rather than using deficit
queues, we use a different type of Lyapunov function that avoids deficits entirely. 
Our formulation also considers the purchasing and pricing aspects of the problem, particularly for a 
manufacturing plant, and considers arbitrary (possibly non-ergodic) material supply and consumer
demand processes.

Previous work in \cite{two-prices-allerton07} uses the drift-plus-penalty 
framework in 
a related revenue maximization problem for a wireless service provider. 
In that context, a two-price result demonstrates that dynamic pricing must be used
to maximize time average profit (a single price is often not enough, although two prices
are sufficient). 
The problem in this paper can be viewed as the ``inverse'' of the service provider
problem, and has an extra constraint that requires the plant to maintain enough inventory
for a sale to take place.  However, a similar two-price structure applies here, 
so that time-varying prices are generally required for optimality, even
if material costs and consumer demands do not change with time. This is a simple phenomenon
that often arises when maximizing the expectation of a non-concave profit function subject to a 
limited supply of raw materials.  In the real world, product providers often use a regular
price that applies most of the time, with reduced ``sale'' prices that are offered
less frequently.  While the incentives for two-price behavior in the real world 
are complex and are often related to product expiration dates (which is not part of our 
mathematical model), two-price  (or multi-price) behavior can arise even in markets 
with non-perishable goods.
Time varying prices also arise  in other contexts, such as  
in the work \cite{pricing-short-life-cycles} which 
treats the sale of a fixed amount of items over a finite time horizon.  


It is important to note that the term ``dynamic pricing''  is often associated with the
practice of price discrimination between consumers with different 
demand functions.  It is well known that charging different consumers different prices is 
tantalizingly profitable (but often illegal). 
Our model does not use such price discrimination, as it offers the same price to all consumers.  
However, the revenue earned from our time-varying
strategy may be indirectly reaping benefits that are similar to those achievable by price
discrimination, without  the inherent unfairness.  This 
is because the aggregate demand function is 
composed of individual demands from consumers with different preferences, which can 
partially be exploited with a time-varying price that operates on  two different price
regions. 

The outline of this paper is as follows: 
In the next section we specify the system model.  The optimal time average profit
is characterized in Section \ref{section:optimal-profit}, where the two-price behavior is
also noted.   Our dynamic control policy is developed in Section  
\ref{section:dynamic} for an i.i.d. model of material cost and consumer demand states.
Section \ref{section:ergodic} treats a more general ergodic model, and arbitrary (possibly non-ergodic)
processes are treated in Section \ref{section:non-ergodic}.

\section{System Model} \label{section:model}

There are $M$ types of raw materials, and each is stored in a different
storage buffer at the plant.  Define $Q_m(t)$ as the (integer) 
number of type $m$ materials
in the plant on slot $t$.  
We temporarily assume all storage buffers have infinite space, and later we show that our
solution can be implemented with finite buffers of size $O(1/\epsilon)$, where the $\epsilon$ parameter
determines a profit-buffer tradeoff.

Let $\bv{Q}(t) = (Q_1(t), \ldots, Q_M(t))$ be the vector of queue sizes, also called 
 the \emph{inventory vector}.    From these materials, the plant can manufacture $K$ types
 of products. Define $\beta_{mk}$ as the (integer) number of type $m$ materials
 required for creation of a single item of product $k$ (for $m \in \{1, \ldots, M\}$ and 
 $k \in \{1, \ldots, K\}$).   We assume that products are assembled quickly, so that 
 a product requested during slot $t$ can be assembled on the same slot, provided that
 there are enough raw materials.\footnote{Algorithms that yield similar performance
 but require products to be assembled one slot before they are delivered can be designed based
 on simple modifications, briefly discussed
 in Section \ref{subsection:assembly-delay}.}  Thus, the plant must have $Q_m(t) \geq \beta_{mk}$
 for all $m \in \{1, \ldots, M\}$ in order to sell one product of type $k$ on slot $t$, and must have
 twice this amount of materials in order to sell two type $k$ products, etc.   
  The simplest
 example is when each raw material itself represents a finished product, which corresponds to the 
 case $K=M$, $\beta_{mm} = 1$ for all $m$,  $\beta_{mk} = 0$ for $m \neq k$.
 However, our model allows for more complex assembly structures, possibly
 with different products requiring some overlapping materials.

Every slot $t$, the plant must decide how many new raw materials to purchase and what
price it should charge for its products.    Let $\bv{A}(t) = (A_1(t), \ldots, A_M(t))$ represent the vector
of the (integer) 
number of  new raw materials purchased on slot $t$. 
Let $\tilde{\bv{D}}(t) = (\tilde{D}_1(t), \ldots, \tilde{D}_K(t))$ be the vector of the (integer) 
number of products sold
on slot $t$. The queueing dynamics for $m \in \{1, \ldots, M\}$ are thus: 
 \begin{equation} \label{eq:dynamics} 
 Q_m(t+1) = \max\left[Q_m(t) - \sum_{k=1}^K \beta_{mk}\tilde{D}_k(t), 0\right] + A_m(t) 
 \end{equation} 
 
 Below we describe the 
 pricing decision model that affects product sales 
 $\tilde{\bv{D}}(t)$, and the cost model associated with purchasing decisions $\bv{A}(t)$.

\subsection{Product Pricing and the Consumer Demand Functions}

For each slot $t$ and each commodity $k$, the plant must decide if it 
desires to offer commodity $k$ for sale, and, if so, what price it should charge. 
Let $Z_k(t)$ represent a binary variable 
that is $1$ if commodity $k$ is offered and is $0$ else.  Let $P_k(t)$ represent
the per-unit price for product $k$ on slot $t$.   We assume that prices $P_k(t)$ are chosen 
within a compact set $\script{P}_k$ of price options.   Thus: 
\begin{equation} \label{eq:p-k-constraint} 
P_k(t) \in \script{P}_k  \: \mbox{ for all products $k \in \{1, \ldots, K\}$ and all slots $t$} 
\end{equation} 
The sets $\script{P}_k$ 
include only non-negative prices and have a finite maximum price $P_{k, max}$.  For example, 
the set $\script{P}_k$ might represent the interval $0 \leq p \leq P_{k,max}$, 
or might represent a discrete set of prices separated by some minimum price unit.  
 Let $\bv{Z}(t) = (Z_1(t), \ldots, Z_K(t))$ 
and $\bv{P}(t) = (P_1(t), \ldots, P_K(t))$ be vectors of these decision variables.

Let $Y(t)$ represent the \emph{consumer demand state} for slot $t$, which represents
any factors that affect the expected purchasing decisions of consumers on slot $t$. 
Let $\bv{D}(t) = (D_1(t), \ldots, D_K(t))$ be the resulting \emph{demand vector}, where
$D_k(t)$ represents the (integer)
amount of type $k$ products that consumers want to buy
in reaction to the current price $P_k(t)$ and under the current demand state $Y(t)$. 
Specifically, we assume that $D_k(t)$ is a random
variable that depends on $P_k(t)$ and $Y(t)$, is conditionally i.i.d. over all slots
with the same $P_k(t)$ and $Y(t)$ values, and satisfies:  
\begin{equation} \label{eq:demand-function} 
F_k(p, y) = \expect{D_k(t) \left|\right. P_k(t) = p,  Y(t) = y}  \: \: \forall p \in \script{P}_k, y \in \script{Y}
\end{equation}
The $F_k(p,y)$ function is assumed to be continuous in $p\in \script{P}$ for 
each $y \in \script{Y}$.\footnote{This ``continuity'' is automatically satisfied in the
case when $\script{P}_k$ is a finite set of points.  Continuity of $F_k(p,y)$ 
and compactness of $\script{P}_k$
ensures that  linear
functionals of $F_k(p,y)$ have well defined maximizers $p \in \script{P}_k$.} 
We assume that the current demand state $Y(t)$ is known to  the plant at the beginning
of slot $t$, and that the 
demand function $F_k(p, y)$ is also known to the plant.  The process $Y(t)$ takes values in a 
finite or countably infinite set $\script{Y}$, and is assumed to be stationary and ergodic with steady
state probabilities $\pi(y)$, so that: 
\[ \pi(y) = Pr[Y(t) = y] \: \: \forall y \in \script{Y}, \forall t \]
The probabilities $\pi(y)$ are not necessarily known to the plant. 

We assume that the maximum demand for each product $k\in \{1, \ldots, K\}$  is deterministically
bounded by a finite 
integer $D_{k, max}$, so that regardless of price $\bv{P}(t)$ or the demand state $Y(t)$, we have: 
\[ 0 \leq D_k(t) \leq D_{k, max} \: \: \mbox{ for all slots $t$ and all products $k$} \]
This boundedness assumption is useful for analysis.  Such a finite bound is natural 
in cases when the maximum number of customers is limited on any given slot. 
The bound might also be artificially enforced by the plant due to physical constraints 
that limit the number of orders that can be fulfilled on one slot.   Define $\mu_{m,max}$ as
the resulting maximum demand for raw materials of type $m$ on a given slot: 
\begin{equation} \label{eq:mu-max} 
 \mu_{m, max} \defequiv \sum_{k=1}^K\beta_{mk} D_{k,max} 
 \end{equation}

If there is a sufficient amount of raw materials to fulfill all demands in the vector
$\bv{D}(t)$, and if $Z_k(t) = 1$ for all $k$ such that $D_k(t)>0$ (so that product $k$ is
offered for sale), 
then the number of products sold is equal to the demand vector:  $\tilde{\bv{D}}(t) = \bv{D}(t)$.
We are guaranteed to have enough inventory to meet the demands on slot $t$ if $Q_m(t) \geq \mu_{m,max}$
for all $m \in \{1, \ldots, M\}$. 
However, there may not always be enough inventory to fulfill all demands, in which 
case we require a \emph{scheduling decision} that decides how many units of each product
will be assembled to meet a subset of the demands. 
The value of $\tilde{\bv{D}}(t) = (\tilde{D}_1(t), \ldots, \tilde{D}_K(t))$
 must be chosen as an integer vector that satisfies the following \emph{scheduling constraints}: 
\begin{eqnarray}
0 \leq \tilde{D}_k(t) \leq Z_k(t)D_k(t) &  \forall k \in \{1, \ldots, K\} \label{eq:scheduling-constraints1} \\
Q_m(t) \geq \sum_{k=1}^K\beta_{mk}\tilde{D}_k(t)  &  \forall m\in\{1, \ldots, M\} \label{eq:scheduling-constraints2} \end{eqnarray}

\subsection{Raw Material Purchasing Costs} \label{subsection:cost-state}

 Let $X(t)$ represent the \emph{raw material supply state} 
on slot $t$, which contains components that  affect the purchase price of new raw materials.  
Specifically, we assume that $X(t)$ has the form: 
\[ X(t) = [(x_1(t), \ldots, x_M(t)); (s_1(t), \ldots, s_M(t))] \] 
where $x_m(t)$ is the per-unit price of raw material $m$ on slot $t$, 
and $s_m(t)$ is the maximum amount of raw material $m$ available for
sale on slot $t$. 
We assume that 
$X(t)$ takes values on some finite or countably infinite set $\script{X}$,
and that $X(t)$ is stationary and ergodic 
with probabilities: 
\[  \pi(x) = Pr[X(t) = x] \: \: \forall x \in \script{X}, \forall t \]
The $\pi(x)$ probabilities
are not necessarily known to the plant. 

Let $c(\bv{A}(t), X(t))$ be the total 
 cost incurred by the plant for purchasing a vector $\bv{A}(t)$ of new materials
under the supply state $X(t)$: 
\begin{equation} \label{eq:example-cost} 
c(\bv{A}(t), X(t)) = 
                          \sum_{m=1}^M x_m(t) A_m(t)                     
                                       \end{equation} 
   We assume that $\bv{A}(t)$ is limited by the 
                                 constraint $\bv{A}(t) \in \script{A}(X(t))$, where  $\script{A}(X(t))$ is the set of
                                 all vectors 
$\bv{A}(t) = (A_1(t), \ldots, A_M(t))$ such that for all $t$:  
\begin{eqnarray}
0 \leq A_m(t) \leq \min[A_{m,max}, s_m(t)]  & \forall m \in \{1, \ldots, M\} \label{eq:Amax} \\
A_m(t) \mbox{ is an integer } & \forall m \in \{1, \ldots, M\}  \label{eq:integer-a} \\
c(\bv{A}(t), X(t)) \leq c_{max} &  \label{eq:cmax} 
\end{eqnarray}
where $A_{m, max}$ and $c_{max}$ are finite bounds on the total amount of each raw material
that can be purchased, and the total cost of these purchases on one slot, respectively. 
  These finite bounds might arise  from the limited supply of raw materials, 
or might be artificially imposed by the plant in order to limit the risk associated with 
investing in new raw materials on any  given slot.   A simple special case is when 
there is a finite maximum price $x_{m, max}$ for raw material $m$ at any time, and when
$c_{max} = \sum_{m=1}^M x_{m, max} A_{m,max}$.  In this case, the constraint (\ref{eq:cmax}) 
is redundant.

\subsection{The Maximum Profit Objective} \label{subsection:queueing-dynamics}

 Every slot $t$, the plant observes the current queue vector $\bv{Q}(t)$, 
 the current demand state $Y(t)$, and the current supply state $X(t)$, 
 and chooses a purchase vector $\bv{A}(t) \in \script{A}(X(t))$ and 
 pricing vectors  $\bv{Z}(t)$, $\bv{P}(t)$ (with $Z_k(t) \in \{0, 1\}$ and $P_k(t) \in \script{P}_k$ for all $k \in \{1, \ldots, K\}$). 
 The consumers then react by generating a 
 random demand vector $\bv{D}(t)$ with expectations given by (\ref{eq:demand-function}). 
 The actual number of products filled is scheduled by choosing the $\tilde{\bv{D}}(t)$ vector
 according to the scheduling 
 constraints (\ref{eq:scheduling-constraints1})-(\ref{eq:scheduling-constraints2}), and the resulting 
 queueing update is given by  (\ref{eq:dynamics}). 
 
For each $k \in \{1, \ldots, K\}$, define 
$\alpha_k$ as a fixed (non-negative) 
cost associated with assembling one product of type $k$. 
 Define a process $\phi(t)$ as follows: 
 \begin{equation} \label{eq:phi}
  \phi(t) \defequiv  - c(\bv{A}(t), X(t)) +  \sum_{k=1}^K Z_k(t)D_k(t)(P_k(t)-\alpha_k)
  \end{equation} 
  The value of $\phi(t)$ represents the total instantaneous profit due to material purchasing
  and product sales on slot $t$, under the assumption that all demands are fulfilled (so that 
  $\tilde{D}_k(t) = D_k(t)$ for all $k$).  Define $\phi_{actual}(t)$ as the \emph{actual} instantaneous
  profit, defined by replacing the $D_k(t)$ values in the right hand side of (\ref{eq:phi}) with 
  $\tilde{D}_k(t)$ values.  Note that $\phi(t)$ can be either positive, negative,
  or zero, as can $\phi_{actual}(t)$.

  Define time average expectations $\overline{\phi}$ and $\overline{\phi}_{actual}$ 
  as follows: 
  \[ \overline{\phi} \defequiv \lim_{t\rightarrow\infty} \frac{1}{t}\sum_{\tau=0}^{t-1} \expect{\phi(\tau)}  \: \: , 
  \: \: \overline{\phi}_{actual} \defequiv \lim_{t\rightarrow\infty} \frac{1}{t}\sum_{\tau=0}^{t-1} \expect{\phi_{actual}(\tau)} \]
  Every slot $t$, the plant observes the current queue vector $\bv{Q}(t)$, 
 the current demand state $Y(t)$, and the current supply state $X(t)$, 
 and chooses a purchase vector $\bv{A}(t) \in \script{A}(X(t))$ and 
 pricing vectors  $\bv{Z}(t)$, $\bv{P}(t)$ (with $Z_k(t) \in \{0, 1\}$ and $P_k(t) \in \script{P}_k$ for all $k \in \{1, \ldots, K\}$). 
 The consumers then react by generating a 
 random demand vector $\bv{D}(t)$ with expectations given by (\ref{eq:demand-function}). 
 The actual number of products filled is scheduled by choosing the $\tilde{\bv{D}}(t)$ vector
 according to the scheduling 
 constraints (\ref{eq:scheduling-constraints1})-(\ref{eq:scheduling-constraints2}), and the resulting 
 queueing update is given by  (\ref{eq:dynamics}). 
The goal of the plant is to maximize the time average expected profit $\overline{\phi}_{actual}$.  
  For convenience, a table of notation is given in Table \ref{table:notation}.

\begin{table}[ht]
\caption{Table of Notation} \centering
\begin{tabular}{l l} 
\hline Notation &  Definition \\ \hline
$X(t)$ & Supply state, $\pi(x) = Pr[X(t) = x]$ for $x \in \script{X}$\\
$\bv{A}(t) = (A_1(t), \ldots, A_M(t))$ & Raw material purchase vector for slot $t$ \\
$c(\bv{A}(t), X(t))$ & Raw material cost function \\
$\script{A}(X(t))$ & Constraint set for decision variables $\bv{A}(t)$   \\
$Y(t)$ & Consumer demand state, $\pi(y)=Pr[Y(t)=y]$ for $y \in \script{Y}$\\
$\bv{Z}(t) = (Z_1(t), \ldots, Z_K(t))$ & 0/1 sale vector  \\
$\bv{P}(t)=(P_1(t), \ldots, P_K(t))$ & Price vector, $P_k(t) \in \script{P}_k$\\
$\bv{D}(t)= (D_1(t), \ldots, D_K(t))$ & Random demand vector (in reaction to $\bv{P}(t)$) \\
$F_k(p,y)$ & Demand function,  $F_k(p, y) = \expect{D_k(t)\left|\right.P_k(t) = p, Y(t) = y}$\\
$\bv{Q}(t) = (Q_1(t), \ldots, Q_M(t))$ & Queue vector of raw materials in inventory \\
$Q_{m,max}$ & Maximum buffer size of queue $m$ \\
$\alpha_k$ & Cost incurred by assembly of one product of  type $k$  \\
$\beta_{mk}$ & Number of $m$ raw materials needed for assembly product type $k$ \\
$\phi(t)$ & Instantaneous profit variable for slot $t$ (given by (\ref{eq:phi})) \\
$\bv{\mu}(t) = (\mu_1(t), \ldots, \mu_M(t))$ & Departure vector for raw materials, 
$\mu_m(t) = \sum_{k=1}^K \beta_{mk}(t)D_k(t)$ \\
$\tilde{\bv{D}}(t), \tilde{\bv{\mu}}(t)$ & Actual fulfilled demands and raw materials used for slot $t$ \\
$\phi_{actual}(t)$ & Actual instantaneous profit for slot $t$ \\
[1ex] \hline
\end{tabular}
\label{table:notation}
\end{table}

\section{Characterizing Maximum Time Average Profit} \label{section:optimal-profit}

Assume infinite buffer capacity (so that $Q_{m,max} = \infty$ for all $m \in \{1, \ldots, M\}$). 
Consider any control algorithm that makes decisions for $\bv{Z}(t)$, $\bv{P}(t)$, $\bv{A}(t)$, 
and also makes scheduling decisions for $\tilde{\bv{D}}(t)$, 
according to the system structure as described in
the previous section.  Define $\phi^{opt}$ as the \emph{maximum time average profit} 
over all such algorithms, so that all algorithms must satisfy $\overline{\phi}_{actual} \leq \phi^{opt}$, but
there exist algorithms that can yield profit arbitrarily close to $\phi^{opt}$. 
The value of  $\phi^{opt}$ is determined by the steady state
distributions $\pi(x)$ and $\pi(y)$, the cost function $c(\bv{A}(t), X(t))$, and the demand
functions $F_k(p,y)$ according to the following theorem. 

\begin{thm} \label{thm:max-profit} (Maximum Time Average Profit)  Suppose the initial 
queue states satisfy $\expect{Q_m(0)} < \infty$ for all $m \in \{1, \ldots, M\}$. Then
under any control algorithm, the time average achieved profit satisfies: 
\[ \limsup_{t\rightarrow\infty} \frac{1}{t}\sum_{\tau=0}^{t-1} \expect{\phi_{actual}(\tau)} \leq \phi^{opt} \]
where $\phi^{opt}$ is the maximum value of the objective function in the following optimization 
problem, defined in terms of 
auxiliary variables  $\hat{c}$, $\hat{r}$, $\theta(\bv{a}, x)$,  $\hat{a}_m, \hat{\mu}_m$ (for all $x \in \script{X}, \bv{a} \in \script{A}(x)$, 
   $m \in \{1, \ldots, M\}$): 
\begin{eqnarray*}
\mbox{Maximize:} &&   \phi  \\
\mbox{Subject to:}  && \phi = -\hat{c} +  \hat{r}  \: \: \: , \: \: \:  \hat{a}_m \geq \hat{\mu}_m \: \: \forall m    \\
 \hat{c} &=& \sum_{x\in\script{X}} \pi(x)\sum_{\bv{a}\in\script{A}(x)} 
\theta(\bv{a}, x)c(\bv{a}, x)  \\
 \hat{r} &=& \sum_{y \in \script{Y}}\pi(y)\sum_{k=1}^K \expect{Z_k(t)(P_k(t)-\alpha_k)F_k(P_k(t), y)\left|\right. Y(t) = y}   \\
  \hat{a}_m &=& \sum_{x \in \script{X}}\pi(x)\sum_{\bv{a}\in\script{A}(x)}\theta(\bv{a}, x) a_m  \: \: \forall m   \\
 \hat{\mu}_m  &=& \sum_{y \in \script{Y}}\pi(y)\sum_{k=1}^K \beta_{mk} \expect{Z_k(t)F_k(P_k(t), y)\left|\right.Y(t) = y}  \: \: \forall m  \\
 && \hspace{-.3in} 0 \leq \theta(\bv{a}, x)  \leq 1 \: \: \:  \forall x \in \script{X} ,  \bv{a} \in \script{A}(x)  \\
 && \hspace{-.3in} \sum_{\bv{a}\in\script{A}(x)}\theta(\bv{a}, x) = 1 \: \: \: \forall x \in \script{X}  \\
 && \hspace{-.3in} P_k(t) \in \script{P} \: \: , \: \: Z_k(t) \in \{0, 1\} \: \: \: \forall k , t 
\end{eqnarray*}
where $\bv{P}(t) = (P_1(t), \ldots, P_K(t))$ and $\bv{Z}(t) = (Z_1(t), \ldots, Z_K(t))$ are vectors 
randomly chosen with a conditional distribution that can be chosen as any 
distribution that depends only on the observed value of 
$Y(t) = y$. The expectations in the above problem are with respect to the chosen conditional distributions
for these decisions. 
\end{thm}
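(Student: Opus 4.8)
The plan is to prove this as a converse (outer) bound: for an \emph{arbitrary} control policy I will extract, along a suitable subsequence of times, a feasible point of the stated optimization problem whose objective value is at least the achieved time-average profit, so that $\limsup_t \frac{1}{t}\sum_{\tau=0}^{t-1}\expect{\phi_{actual}(\tau)} \le \phi^{opt}$ follows. First I would fix any policy and, for each horizon $t$, introduce the partial time averages $\overline{c}(t) = \frac{1}{t}\sum_{\tau<t}\expect{c(\bv{A}(\tau),X(\tau))}$, the purchase rates $\overline{A}_m(t)$, the fulfilled consumption rates $\overline{\tilde{\mu}}_m(t)$ with $\tilde{\mu}_m(\tau)=\sum_{k}\beta_{mk}\tilde{D}_k(\tau)$, and the fulfilled revenue rate. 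Every per-slot quantity is uniformly bounded (by $c_{max}$, $A_{m,max}$, $\mu_{m,max}$, and $D_{k,max}P_{k,max}$), so by Bolzano--Weierstrass I can pass to a subsequence $t_n\to\infty$ along which all of these averages converge and along which $\frac{1}{t_n}\sum_{\tau<t_n}\expect{\phi_{actual}(\tau)}$ tends to its $\limsup$.

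The second step is a flow-conservation inequality. Because the scheduling constraint (\ref{eq:scheduling-constraints2}) guarantees $Q_m(t)\ge \tilde{\mu}_m(t)$, the $\max[\cdot,0]$ in (\ref{eq:dynamics}) is inactive and the dynamics telescope exactly: $Q_m(t)-Q_m(0)=\sum_{\tau<t}\left(A_m(\tau)-\tilde{\mu}_m(\tau)\right)$. Using $Q_m(t)\ge 0$, taking expectations, dividing by $t$, and invoking $\expect{Q_m(0)}<\infty$ yields $\overline{A}_m(t)-\overline{\tilde{\mu}}_m(t)\ge -\expect{Q_m(0)}/t$, so in the limit along $t_n$ the purchase rate weakly dominates the fulfilled consumption rate for every material $m$.

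The third and central step builds the feasible point. On the purchasing side I define $\theta(\bv{a},x)$ as the long-run conditional frequency (along $t_n$) with which the policy selects $\bv{a}\in\script{A}(x)$ given $X(\tau)=x$; stationarity of $X(t)$ and boundedness make this a valid distribution and give $\hat{c}=\lim\overline{c}(t_n)$ and $\hat{a}_m=\lim\overline{A}_m(t_n)$. On the pricing side I must reconcile the fulfilled demand $\tilde{D}_k$ appearing in $\phi_{actual}$ with the full demand function $F_k$ used in the optimization. The key device is to mix each actually-used $(y,P_k)$ decision with the ``do not offer'' option: for a profitable price ($P_k\ge\alpha_k$) I set $Z_k=1$ at price $P_k$ with weight equal to the average fulfillment fraction $\smallexpect{\tilde{D}_k}/F_k(P_k,y)\le 1$, and for a loss price ($P_k<\alpha_k$) I set $Z_k=0$. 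Since $\tilde{D}_k\le Z_k D_k$ and prices and $\alpha_k$ are non-negative, this produces a conditional distribution (given $y$) whose full-demand revenue $\hat{r}$ is \emph{at least} the achieved fulfilled revenue and whose full-demand consumption $\hat{\mu}_m$ is \emph{at most} the achieved fulfilled rate $\lim\overline{\tilde{\mu}}_m(t_n)$. Combined with the second step this gives $\hat{a}_m\ge\lim\overline{\tilde{\mu}}_m(t_n)\ge\hat{\mu}_m$, so the constraint $\hat{a}_m\ge\hat{\mu}_m$ holds, and with $\hat{c}=\lim\overline{c}(t_n)$ the objective $-\hat{c}+\hat{r}$ is at least the $\limsup$ of the achieved profit.

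I expect the main obstacle to be exactly this reconciliation of partial fulfillment with the full-service optimization: one must verify that the mixing device simultaneously increases (weakly) revenue and reduces (weakly) material consumption, using non-negativity of prices and of $\alpha_k$, and that loss-making offers can be discarded without hurting the bound. A secondary technical burden is making the ``long-run conditional distribution'' rigorous --- establishing that the empirical conditional measures converge weak-$*$ on the compact sets $\script{P}_k$ so that continuity of $F_k(p,y)$ applies, and that the countable-state averages against $\pi(x)$ and $\pi(y)$ are justified by stationarity and dominated convergence. Once these are in place, feasibility of the constructed point is immediate from the definition of the optimization, and the theorem follows by letting $n\to\infty$.
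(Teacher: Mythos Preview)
Your proposal is correct and follows essentially the same route as the paper's proof in Appendix~A: fix a policy, pass to a subsequence realizing the $\limsup$, use telescoping of the queue dynamics to obtain the flow-conservation inequality $\hat a_m\ge\hat\mu_m$, and rewrite the empirical averages via conditional frequencies $\theta(\bv{a},x,t)$ and conditional pricing distributions to produce a feasible point of the stated optimization.

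The one noteworthy difference is in reconciling partial fulfillment $\tilde D_k$ with the full demand $F_k$. You split into profitable and loss prices and, for profitable ones, mix $(Z_k{=}1,P_k)$ with $Z_k{=}0$ at weight equal to the fulfillment fraction; this yields a feasible point with $\hat r$ \emph{at least} the achieved revenue and $\hat\mu_m$ \emph{at most} the achieved consumption. The paper instead defines $\gamma_k(y,\bv z,\bv p,t)=\smallexpect{z_k\tilde D_k}/\smallexpect{z_k D_k}\in[0,1]$ and observes that the achieved pair $(\nu_k,d_k)$ lies in the convex hull of
\[
\Omega_k(y)=\{((p-\alpha_k)zF_k(p,y),\,zF_k(p,y)):p\in\script P_k,\ z\in\{0,1\}\},
\]
so the achieved revenue/consumption is \emph{exactly} realized by some randomized $(Z_k,P_k)$ rule, with no case analysis on the sign of $P_k-\alpha_k$ needed. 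Your device is correct but slightly more work than necessary; the convex-hull observation $\Omega_k(y)\subset\hat\Omega_k(y)\subset\conv{\Omega_k(y)}$ (where $\hat\Omega_k(y)$ replaces $z\in\{0,1\}$ by $\gamma\in[0,1]$) gives the same conclusion uniformly.
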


\begin{proof} 
See Appendix A.
\end{proof}

In Section \ref{section:dynamic} 
we show that algorithms can be designed to achieve a time average profit $\overline{\phi}$ that is 
within $\epsilon$ of the value $\phi^{opt}$ defined in Theorem \ref{thm:max-profit}, for any arbitrarily small 
$\epsilon>0$.  Thus, $\phi^{opt}$ represents the optimal time average profit over all possible algorithms. 

The variables in Theorem \ref{thm:max-profit} can be interpreted as follows:   The variable 
$\theta(\bv{a}, x)$ 
represents a conditional probability of choosing $\bv{A}(t) = \bv{a}$ given that the plant
observes supply state $X(t) = x$.  The variable $\hat{c}$ thus represents the time average cost of purchasing raw materials
under this stationary randomized policy, and the variable $\hat{r}$ represents the time
average revenue for selling products. The variables $\hat{a}_m$ and $\hat{\mu}_m$ represent
the time average arrival and departure rates for queue $m$, respectively. 
The above theorem thus characterizes $\phi^{opt}$ in terms of 
all possible \emph{stationary randomized control algorithms}, that is, all algorithms
that  make  randomized choices for $\bv{A}(t), \bv{Z}(t), \bv{P}(t)$ according to fixed
conditional distributions given the supply state $X(t)$ and demand state $Y(t)$.
Note that Theorem \ref{thm:max-profit} contains 
no variables for the  scheduling decisions for $\tilde{\bv{D}}(t)$, made
subject to (\ref{eq:scheduling-constraints1})-(\ref{eq:scheduling-constraints2}).  
Such scheduling decisions allow choosing $\tilde{\bv{D}}(t)$ in reaction
to the demands $\bv{D}(t)$,  and hence allow more flexibility beyond the choice of the 
$\bv{Z}(t)$ and $\bv{P}(t)$ variables alone (which must be chosen before the demands
$\bv{D}(t)$ are observed).  That such  additional scheduling  options
cannot be exploited to increase time average profit  is a consequence of our
proof of Theorem \ref{thm:max-profit}.


We say that a policy 
is \emph{$(X,Y)$-only} if it chooses $\bv{P}(t)$, $\bv{Z}(t)$, $\bv{A}(t)$ values as a stationary and randomized
function only of the current observed $X(t)$ and $Y(t)$ states. 
Because the sets $\script{P}_k$ are compact and the functions $F_k(p,y)$ are continuous
in $p \in \script{P}_k$ for each $y \in \script{Y}$,  it can be shown that the value of $\phi^{opt}$ in 
Theorem \ref{thm:max-profit} can be \emph{achieved} by a particular $(X,Y)$-only policy, as shown
in the following corollary.

\begin{cor} \label{cor:1} There exists an $(X,Y)$-only policy $\bv{P}^*(t)$, $\bv{Z}^*(t)$, $\bv{A}^*(t)$
such that: \footnote{Note that in (\ref{eq:a-stat1} ) we have changed the ``$\geq$'' into ``$=$''. It is easy to show that doing so in Theorem \ref{thm:max-profit} does not result in any loss of optimality.}
\begin{eqnarray} 
 \expect{\phi^*(t)} &=& \phi^{opt} \label{eq:phi-stat1}  \\
\expect{A_m^*(t)} &=& \expect{\mu_m^*(t)} \forall m \in \{1, \ldots, M\} \label{eq:a-stat1} 
\end{eqnarray}
 where $\phi^{opt}$ is the optimal time average profit defined in Theorem \ref{thm:max-profit}, and
where $\expect{\phi^*(t)}$ and $\expect{\mu_m^*(t)}$ are  given by: 
\begin{eqnarray*}
 \expect{\phi^*(t)} &=& -\expect{c(\bv{A}^*(t), X(t))} + \sum_{k=1}^K\expect{Z_k^*(t)(P_k^*(t)-\alpha_k)F_k(P_k^*(t), Y(t))} \\
 \expect{\mu_m^*(t)} &=& \sum_{k=1}^K \beta_{mk} \expect{Z_k^*(t)F_k(P_k^*(t), Y(t))} \: \: \forall m \in \{1,\ldots, M\}
 \end{eqnarray*}
where 
the expectations are with respect to the stationary probability distributions 
$\pi(x)$ and $\pi(y)$ for $X(t)$ and $Y(t)$, and the (potentially randomized) decisions 
for $\bv{A}^*(t), \bv{Z}^*(t), \bv{P}^*(t)$ that depend on $X(t)$ and $Y(t)$.  
\end{cor}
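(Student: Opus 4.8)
The plan is to upgrade the supremum characterization of Theorem~\ref{thm:max-profit} into an attained maximum realized by a single stationary $(X,Y)$-only policy. Theorem~\ref{thm:max-profit} only asserts that profit can be pushed \emph{arbitrarily close} to $\phi^{opt}$, whereas the corollary claims the existence of one fixed policy that \emph{achieves} it exactly. I would therefore argue that the set of one-slot expectation vectors $(\hat{c}, \hat{r}, \hat{a}_1, \ldots, \hat{a}_M, \hat{\mu}_1, \ldots, \hat{\mu}_M)$ realizable by $(X,Y)$-only policies is compact, that the feasibility constraints are closed, and that the objective $-\hat{c} + \hat{r}$ is continuous; the Weierstrass extreme value theorem then yields an optimal point and hence an optimal policy.

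First I would build the achievable region state-by-state. Fix a demand state $y \in \script{Y}$. For each of the $2^K$ choices of $\bv{Z} \in \{0,1\}^K$, the map $\bv{P} \mapsto (\sum_k Z_k(P_k-\alpha_k)F_k(P_k,y), \{\sum_k \beta_{mk}Z_k F_k(P_k,y)\}_m)$ is continuous on the compact product $\prod_k \script{P}_k$ (using continuity of each $F_k(\cdot,y)$), so its image is compact; the union over the finitely many $\bv{Z}$ is compact, and taking the convex hull of this union (to model randomized pricing, with Caratheodory guaranteeing a finite mixture suffices) gives a compact convex set $\script{G}_y$. An analogous construction on the supply side, where the purchase distribution $\theta(\cdot, x)$ ranges over the simplex on the finite set $\script{A}(x)$, produces a compact convex set $\script{H}_x$ of achievable $(\hat{c}\text{-contribution}, \{\hat{a}_m\})$ vectors for each $x \in \script{X}$. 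The overall achievable region is the product of the $\pi$-weighted averages $\sum_{y} \pi(y)\script{G}_y$ on the demand side and $\sum_{x} \pi(x)\script{H}_x$ on the supply side.

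The main obstacle is establishing compactness of these averaged regions when $\script{X}$ and $\script{Y}$ are countably infinite, since an infinite Minkowski-type average need not be closed in general. Here I would lean on the uniform boundedness supplied by the model: $0 \leq D_k(t) \leq D_{k,max}$, $P_k(t) \leq P_{k,max}$, $c(\bv{A}(t),X(t)) \leq c_{max}$, and $A_m(t) \leq A_{m,max}$ bound every coordinate of the outcome vector uniformly across all states. This uniform boundedness gives tightness, so any sequence of achievable vectors has a convergent subsequence whose limit is again achievable (by a diagonalization over states together with a dominated-convergence argument on the $\pi$-weighted sums), establishing closedness and hence compactness. Intersecting with the closed constraint $\{\hat{a}_m \geq \hat{\mu}_m \; \forall m\}$ keeps the feasible set compact, and since $-\hat{c}+\hat{r}$ is continuous it attains its maximum $\phi^{opt}$ at some feasible point; reading off the pricing and purchasing distributions at that point defines the desired $(X,Y)$-only policy $\bv{P}^*(t), \bv{Z}^*(t), \bv{A}^*(t)$ with $\expect{\phi^*(t)} = \phi^{opt}$ and $\expect{A_m^*(t)} \geq \expect{\mu_m^*(t)}$.

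Finally, to justify the footnote's replacement of ``$\geq$'' by ``$=$'' in (\ref{eq:a-stat1}), I would observe that purchasing material $m$ enters only $\hat{a}_m$ and the cost $\hat{c}$ (through the non-negative price $x_m$), and affects neither the revenue $\hat{r}$ nor any usage rate $\hat{\mu}_{m'}$. Hence if the optimal policy had $\expect{A_m^*(t)} > \expect{\mu_m^*(t)}$ for some $m$, scaling down the purchase amounts of material $m$ (feasible since $A_m = 0$ always lies in $\script{A}(x)$) would weakly reduce $\hat{c}$ and therefore weakly increase profit; by continuity of $\hat{a}_m$ in this scaling I can reduce it to exactly $\expect{\mu_m^*(t)}$ without sacrificing optimality, yielding the equality claimed in the corollary.
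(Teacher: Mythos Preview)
Your proposal is correct and follows essentially the same route the paper indicates: the paper does not give a detailed proof of Corollary~\ref{cor:1} but simply remarks that ``because the sets $\script{P}_k$ are compact and the functions $F_k(p,y)$ are continuous in $p \in \script{P}_k$ for each $y \in \script{Y}$, it can be shown that the value of $\phi^{opt}$ \ldots can be \emph{achieved} by a particular $(X,Y)$-only policy.'' Your compactness-plus-Weierstrass argument, together with the diagonalization/dominated-convergence step for countably infinite state spaces and the monotonicity observation for tightening $\hat{a}_m \geq \hat{\mu}_m$ to equality, is exactly the natural way to fill in that hint; the only minor caveat is that ``scaling down purchases'' should be read as adjusting the randomization probabilities $\theta(\bv{a},x)$ (since $A_m$ is integer-valued), which you can do because $\bv{A}=\bv{0}\in\script{A}(x)$ and $c(\cdot,x)$ is nondecreasing in each coordinate.
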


\subsection{On the Sufficiency of Two Prices}

It can be shown that the $(X,Y)$-only policy of Corollary \ref{cor:1} can be used to achieve time average
profit arbitrarily close to optimal as follows:  
Define a parameter $\rho$ such that $0 < \rho < 1$.  Use the $(X,Y)$-only decisions for $\bv{P}^*(t)$
and $\bv{A}^*(t)$ every slot $t$, but use new decisions $\tilde{Z}_k(t) = Z_k^*(t)1_k(t)$, where $1_k(t)$ 
is an i.i.d. Bernoulli process with $Pr[1_k(t)  =1] = \rho$.  It follows that the inequality (\ref{eq:a-stat1}) 
becomes: 
\[ \expect{A_m^*(t)} =  \expect{\mu_m^*(t)} = (1/\rho)\expect{\tilde{\mu}_m(t)}  \]
where $\tilde{\mu}_m(t)$ corresponds to the new decisions $\tilde{Z}_k(t)$. 
It follows that all queues with non-zero arrival rates $\expect{A_m^*(t)}$ have these rates \emph{strictly greater}
than the expected service rates $\expect{\tilde{\mu}_m(t)}$, and so these queues grow to infinity with probability 
1. It follows that we always have enough material to meet the consumer demands, so that $\tilde{\bv{D}}(t) = \bv{D}(t)$
and the scheduling decisions (\ref{eq:scheduling-constraints1})-(\ref{eq:scheduling-constraints2}) become
irrelevant.  This reduces profit only by a factor $O(1-\rho)$, which can be made arbitrarily small as $\rho \rightarrow 1$. 

Here we show that the $(X,Y)$-only policy of Corollary \ref{cor:1}
can be changed into an $(X,Y)$-only  policy that 
randomly chooses between at most \emph{two} prices for each unique 
product $k \in \{1, \ldots, K\}$ and each unique demand state $Y(t) \in \script{Y}$,
while still satisfying (\ref{eq:phi-stat1})-(\ref{eq:a-stat1}).  This result is based on a similar two-price
theorem  derived  in \cite{two-prices-allerton07} for the case of a service provider with a single
queue.  We extend the result here to the case of a product provider with multiple queues. 

\begin{thm} \label{thm:two-price} Suppose there exists an $(X,Y)$-only algorithm  that allocates
$\bv{Z}(t)$ and $\bv{P}(t)$ to yield (for some given values $\hat{r}$ and $\hat{\mu}_m$ for 
$m \in \{1,\ldots, M\}$): 
\begin{eqnarray}
&\sum_{k=1}^K \expect{Z_k(t)(P_k(t)-\alpha_k)F_k(P_k(t), Y(t))} \geq \hat{r}& \label{eq:two-price1} \\
&\sum_{k=1}^K \beta_{mk} \expect{Z_k(t) F_k(P_k(t), Y(t))} \leq \hat{\mu}_m \: \: \mbox{ for all $m \in \{1, \ldots, M\}$}& \label{eq:two-price2} 
\end{eqnarray}
Then the same inequality constraints can be achieved by a new stationary randomized 
policy $\bv{Z}^*(t)$, $\bv{P}^*(t)$
that uses at most two prices for each unique product $k \in \{1, \ldots, K\}$ and each unique
demand state $Y(t) \in \script{Y}$. 
\end{thm}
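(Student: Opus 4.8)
The plan is to reduce the claim to a two-dimensional convex-geometry statement applied separately to each product $k$ and each demand state $y \in \script{Y}$. First I would observe that the two quantities being constrained, (\ref{eq:two-price1}) and (\ref{eq:two-price2}), are \emph{additive} across products and are built only from the per-product marginal behavior of the pair $(Z_k(t),P_k(t))$ conditioned on $Y(t)=y$. Since $F_k(p,y)$ depends on the demand state but not the supply state, any dependence of the original policy on $X(t)$ and any joint coupling across products can be averaged out (replaced by the marginal-given-$Y=y$ distribution) without changing the values in (\ref{eq:two-price1})--(\ref{eq:two-price2}); thus it suffices to re-design, for each pair $(k,y)$ independently, the conditional distribution of $(Z_k,P_k)$ given $Y=y$ so that two per-product quantities are preserved: the expected demand $\expect{Z_k F_k(P_k,y)\mid Y=y}$ and the expected revenue $\expect{Z_k(P_k-\alpha_k)F_k(P_k,y)\mid Y=y}$.

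Next I would encode each deterministic decision as a point in the plane. For fixed $(k,y)$ define the compact set
\[ \script{S}_{k,y} \defequiv \{(0,0)\} \cup \{(F_k(p,y),\, (p-\alpha_k)F_k(p,y)) : p \in \script{P}_k\}, \]
where the first coordinate is the demand contributed and the second the revenue contributed (the origin corresponding to the no-sale decision $Z_k=0$). Compactness of $\script{P}_k$ and continuity of $F_k(\cdot,y)$ make $\script{S}_{k,y}$ compact. Any stationary randomized choice of $(Z_k,P_k)$ produces an expected pair $(d^*,r^*)$ lying in the convex hull $\conv{\script{S}_{k,y}}$, and conversely every point of this hull is realizable by randomizing over $\script{S}_{k,y}$. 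Let $(d^*,r^*)$ be the pair produced by the original policy.

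Now I would hold the demand coordinate fixed at $d^*$ and push the revenue coordinate to the top of the hull, defining $r^{\dagger} \defequiv \max\{r : (d^*,r) \in \conv{\script{S}_{k,y}}\}$, which is attained by compactness and satisfies $r^{\dagger}\geq r^*$. The point $(d^*,r^{\dagger})$ lies on the upper boundary of the planar convex hull. The decisive step is the standard two-dimensional fact that a boundary point of the convex hull of a set in $\mathbb{R}^2$ is a convex combination of at most \emph{two} extreme points of that set, and extreme points of $\conv{\script{S}_{k,y}}$ belong to $\script{S}_{k,y}$; hence $(d^*,r^{\dagger})$ is realized by a distribution on at most two elements of $\script{S}_{k,y}$, i.e.\ by randomizing over at most two price decisions (one of which may be the no-sale option). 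This per-$(k,y)$ replacement keeps the demand exactly equal to $d^*$ and can only raise the revenue.

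Finally I would reassemble the new policy by using, for each $(k,y)$, the two-point distribution just constructed (products chosen independently), which is again $(X,Y)$-only. Because the demand contribution of every $(k,y)$ pair is unchanged, the left side of (\ref{eq:two-price2}) is identical to that of the original policy and hence still $\leq \hat{\mu}_m$ for every $m$; because each revenue contribution is non-decreasing, the left side of (\ref{eq:two-price1}) is at least its original value and hence still $\geq \hat{r}$. The main obstacle is obtaining \emph{two} support points rather than the three that Carath\'eodory would give for a planar convex hull; this is exactly what the fixed-demand reduction buys, since collapsing the problem onto the upper boundary of a two-dimensional hull forces the optimal mixture onto a single edge (or vertex), which is supported on at most two points. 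A secondary technical point is justifying that the maximizing boundary point is attained and that extreme points lie in $\script{S}_{k,y}$, both of which follow from compactness of $\script{P}_k$ and continuity of $F_k$.
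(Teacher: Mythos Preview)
Your proposal is correct and follows essentially the same approach as the paper: decompose per product $k$ and demand state $y$, encode each deterministic $(z,p)$ decision as a point in a two-dimensional (demand, revenue) set, observe that the original policy's expectation lies in the convex hull, then fix the demand coordinate and push the revenue coordinate to the boundary of the hull so that the resulting boundary point can be written as a convex combination of at most two elements of the underlying compact set. The paper's Appendix~B carries out exactly this argument (with the coordinates in the order (revenue, demand) rather than (demand, revenue), and citing Carath\'eodory for the three-point bound before sharpening to two on the boundary), so your write-up matches it in all essential respects.
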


 \begin{proof} The proof is given in Appendix B.
 \end{proof}

The expectation on the left hand side of (\ref{eq:two-price1}) represents the expected revenue generated
from sales under the original $(X,Y)$-only policy, and the expectation on the left hand side of (\ref{eq:two-price2}) 
represents the expected departures from queue $Q_m(t)$ under this policy.  The theorem says that the 
pricing part of the $(X,Y)$-only
algorithm, which potentially uses many different price options,  
can be changed to a 2-price algorithm without decreasing revenue
or increasing demand for materials.

 
 
 Simple examples can be given to show that two prices are often \emph{necessary}
 to achieve maximum time average profit, even when user demand functions are the same
 for all slots (see \cite{two-prices-allerton07} for a simple  example for the related
 service-provider problem).   We emphasize that the $(X,Y)$-only policy of Corollary \ref{cor:1} 
is not necessarily  practical, as implementation would require 
full knowledge of the $\pi(x)$ and $\pi(y)$ distributions, and it would require 
a solution to the (very complex) optimization problem of Theorem \ref{thm:max-profit} even if
the $\pi(x)$ and $\pi(y)$ distributions were known.
Further, it relies on having an infinite buffer capacity 
(so that $Q_{m,max} = \infty$ for all $m \in\{1, \ldots, M\}$).
A more practical algorithm is developed in the next section 
that overcomes these difficulties.

 \section{A  Dynamic Pricing and Purchasing Algorithm}  \label{section:dynamic}

 Here we construct a dynamic algorithm that makes purchasing and pricing 
decisions in reaction to the current queue sizes and the observed $X(t)$ and 
$Y(t)$ states, without knowledge of the $\pi(x)$ and $\pi(y)$ probabilities  that govern
the evolution of these
states.  We begin with the assumption that $X(t)$ is i.i.d. over slots with probabilities $\pi(x) = Pr[X(t) = x]$, 
and $Y(t)$ is i.i.d. over slots with $\pi(y) = Pr[Y(t)=y]$. This assumption is extended to more general
non-i.i.d. processes in Sections \ref{section:ergodic} and \ref{section:non-ergodic}. 

 Define $1_k(t)$ as an indicator variable that is $1$ if and only 
 if $Q_m(t)< \mu_{m, max}$ for some queue $m$ such that $\beta_{mk}>0$ (so that type $m$
 raw material is used to create product $k$): 
 \begin{equation} \label{eq:edge}  
 1_k(t)  = \left\{ \begin{array}{ll}
                          1 &\mbox{ if $Q_m(t) < \mu_{m, max}$ for some  $m$ such that $\beta_{mk}>0$} \\
                             0  & \mbox{ otherwise} 
                            \end{array}
                                 \right.
                                 \end{equation} 
                                 
To begin, let us choose an algorithm from the restricted class of algorithms that choose $\bv{Z}(t)$ values 
to satisfy the following
\emph{edge constraint} every slot $t$: 
\begin{eqnarray} 
\mbox{For all $k \in\{1, \ldots, K\}$, we have: } Z_k(t) = 0 \mbox{ whenever } 1_k(t) = 1 \label{eq:edge-constraint} 
\end{eqnarray} 
 That is, the edge constraint (\ref{eq:edge-constraint}) ensures that 
 no type $k$ product is sold unless
 inventory in all of its corresponding raw material queues $m$ is at least $\mu_{m, max}$. 
   Under this restriction, we always have enough raw material for any generated demand vector, 
   and so 
 $\tilde{D}_k(t) = D_k(t)$  for all products $k$ and all slots $t$.  Thus, 
 from (\ref{eq:phi}) we have  $\phi_{actual}(t) = \phi(t)$.  
 Define 
 $\mu_m(t)$ as the number of material queue $m$ departures on slot $t$: 
 \begin{equation} \label{eq:mu} 
 \mu_m(t) \defequiv \sum_{k=1}^K \beta_{mk}Z_k(t)D_k(t)
 \end{equation}
 The queueing dynamics of (\ref{eq:dynamics})
 thus become: 
 \begin{equation} \label{eq:dynamics1} 
  Q_m(t+1) = Q_m(t) - \mu_m(t) + A_m(t) 
  \end{equation} 
The above equation continues to assume we have infinite buffer space, but we soon show that we
need only a finite buffer to implement our solution.   
 
 \subsection{Lyapunov Drift} 
 
 For a given set of non-negative 
 parameters $\{\theta_m\}$ for  $m \in \{1, \ldots, M\}$, 
define the non-negative \emph{Lyapunov function} $L(\bv{Q}(t))$ as follows: 
\begin{eqnarray}
L(\bv{Q}(t)) \defequiv  \frac{1}{2}\sum_{m=1}^M (Q_m(t)-\theta_m)^2 \label{eq:lyap-function} 
\end{eqnarray}
This Lyapunov function is similar to that used for stock trading problems in \cite{neely-stock-arxiv}, 
and has the flavor of keeping queue backlog near a non-zero value $\theta_m$, as in \cite{neely-energy-delay-it}. 
Define the conditional Lyapunov drift $\Delta(\bv{Q}(t))$ as follows:\footnote{Strictly speaking, 
we should use the notation $\Delta(\bv{Q}(t), t)$, as the drift may be non-stationary. 
However, we use the simpler notation $\Delta(\bv{Q}(t))$ as a formal representation of the 
right hand side of (\ref{eq:lyap}).}
\begin{equation} \label{eq:lyap} 
\Delta(\bv{Q}(t)) \defequiv \expect{L(\bv{Q}(t+1)) - L(\bv{Q}(t)) \left|\right.\bv{Q}(t)} 
\end{equation} 
Define a constant $V>0$, to be used to  affect the revenue-storage tradeoff.  Using
the stochastic optimization technique of \cite{now}, our approach is to design a strategy
that, every slot $t$,  observes current system conditions $\bv{Q}(t)$, $X(t)$, $Y(t)$ 
and makes pricing and purchasing decisions to 
minimize a bound on the following ``drift-plus-penalty'' expression: 
\[ \Delta(\bv{Q}(t)) - V\expect{\phi(t)\left|\right.\bv{Q}(t)}  \]
where $\phi(t)$ is the instantaneous profit function defined in (\ref{eq:phi}). 
%

\subsection{Computing the Drift}

We have the following lemma. 
\begin{lem} \label{lem:drift-comp}  (Drift Computation) Under any algorithm that satisfies the 
edge constraint (\ref{eq:edge-constraint}), and for any constants $V\geq0$, $\theta_m \geq 0$ 
for $m \in\{1, \ldots, M\}$, 
the Lyapunov drift $\Delta(\bv{Q}(t))$ satisfies: 
\begin{eqnarray}
  \Delta(\bv{Q}(t)) - V\expect{\phi(t)|\bv{Q}(t)}  &\leq& B - V\expect{\phi(t)|\bv{Q}(t)} \nonumber \\
  && + \sum_{m=1}^M(Q_m(t) - \theta_m)\expect{A_m(t) - \mu_m(t) \left|\right.\bv{Q}(t)} \label{eq:drift-q}
\end{eqnarray}
where the constant $B$ is defined: 
\begin{equation} \label{eq:B} 
B \defequiv \frac{1}{2}\sum_{m=1}^M\max[A_{m,max}^2, \mu_{m,max}^2] 
\end{equation}  
\end{lem}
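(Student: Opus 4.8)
The plan is to carry out a direct one-slot drift computation, exploiting the fact that the edge constraint removes the troublesome $\max[\cdot, 0]$ operation from the queue update. First I would observe, as already noted in the text preceding the lemma, that any algorithm satisfying the edge constraint (\ref{eq:edge-constraint}) always has enough raw material to fulfill the realized demand, so that the dynamics simplify from (\ref{eq:dynamics}) to the linear recursion (\ref{eq:dynamics1}), namely $Q_m(t+1) = Q_m(t) - \mu_m(t) + A_m(t)$. The point worth verifying here is that $\mu_m(t) \leq Q_m(t)$ holds pointwise: since $\mu_m(t) = \sum_{k=1}^K \beta_{mk} Z_k(t) D_k(t) \leq \mu_{m,max}$ always, and the edge constraint forces $Z_k(t) = 0$ for every product $k$ that draws on a queue $m$ with $Q_m(t) < \mu_{m,max}$, any queue $m$ that is actually served has $Q_m(t) \geq \mu_{m,max} \geq \mu_m(t)$. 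Hence the floor at zero is never active and (\ref{eq:dynamics1}) holds with equality.

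Next I would expand the Lyapunov difference term by term. Writing $\delta_m(t) \defequiv A_m(t) - \mu_m(t)$, so that $Q_m(t+1) - \theta_m = (Q_m(t) - \theta_m) + \delta_m(t)$, the square expands as $(Q_m(t+1)-\theta_m)^2 - (Q_m(t)-\theta_m)^2 = 2(Q_m(t)-\theta_m)\delta_m(t) + \delta_m(t)^2$. Summing over $m$ and dividing by two gives the deterministic identity
\[ L(\bv{Q}(t+1)) - L(\bv{Q}(t)) = \sum_{m=1}^M (Q_m(t)-\theta_m)\delta_m(t) + \frac{1}{2}\sum_{m=1}^M \delta_m(t)^2 . \]

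The key inequality is the bound on the quadratic remainder term. Because $0 \leq A_m(t) \leq A_{m,max}$ and $0 \leq \mu_m(t) \leq \mu_{m,max}$, the two quantities being subtracted are each non-negative and bounded, so $\delta_m(t)^2 = (A_m(t)-\mu_m(t))^2 \leq \max[A_m(t)^2, \mu_m(t)^2] \leq \max[A_{m,max}^2, \mu_{m,max}^2]$; summing and halving yields exactly the constant $B$ of (\ref{eq:B}). Substituting this bound and then taking the conditional expectation $\expect{\cdot|\bv{Q}(t)}$ (noting that $\bv{Q}(t)$ and the parameters $\theta_m$ are known constants under the conditioning) produces $\Delta(\bv{Q}(t)) \leq B + \sum_{m=1}^M (Q_m(t)-\theta_m)\expect{A_m(t)-\mu_m(t)|\bv{Q}(t)}$. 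Finally I would subtract $V\expect{\phi(t)|\bv{Q}(t)}$ from both sides to arrive at (\ref{eq:drift-q}).

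I do not anticipate a genuine obstacle, as the argument is routine telescoping plus a uniform bound. The only step demanding a moment of care is the elementary inequality $(a-b)^2 \leq \max[a^2, b^2]$ for $a, b \geq 0$, which I would justify by a sign-case argument (if $a \geq b$ then $(a-b)^2 \leq a^2$, and otherwise $(a-b)^2 = (b-a)^2 \leq b^2$) rather than a triangle-type bound, since the latter would give the looser constant $(A_{m,max}+\mu_{m,max})^2$. Keeping $B$ tight in this way is precisely what supports the subsequent buffer-size estimates of order $O(1/\epsilon)$.
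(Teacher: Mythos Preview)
Your proposal is correct and follows essentially the same approach as the paper: use the edge constraint to justify the linear recursion (\ref{eq:dynamics1}), expand the squared difference, bound the quadratic remainder $(A_m(t)-\mu_m(t))^2$ by $\max[A_{m,max}^2,\mu_{m,max}^2]$, and take conditional expectations. Your write-up is in fact slightly more explicit than the paper's in verifying why the floor at zero is inactive and in isolating the elementary inequality $(a-b)^2 \leq \max[a^2,b^2]$ for $a,b\geq 0$, but the argument is otherwise identical.
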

\begin{proof}  The edge constraint (\ref{eq:edge-constraint}) ensures that the dynamics (\ref{eq:dynamics1}) 
hold for all $t$. 
By squaring  (\ref{eq:dynamics1}) we have: 
\[ (Q_m(t+1) - \theta_m)^2  =  (Q_m(t) - \theta_m)^2 + (A_m(t)-\mu_m(t))^2  + 2(Q_m(t)-\theta_m)(A_m(t) - \mu_m(t)) \]
Dividing by $2$, summing over $m \in \{1, \ldots, M\}$, and taking conditional expectations yields: 
\[ \Delta(\bv{Q}(t)) = \expect{B(t)|\bv{Q}(t)} + \sum_{m=1}^M(Q_m(t) - \theta_m)\expect{A_m(t) - \mu_m(t)|\bv{Q}(t)} \]
where $B(t)$ is defined: 
\begin{equation} \label{eq:Bt-appendix} 
B(t) \defequiv \frac{1}{2}\sum_{m=1}^M(A_m(t) - \mu_m(t))^2 
\end{equation} 
By the finite bounds on $A_m(t)$ and $\mu_m(t)$, we clearly have $B(t) \leq B$ for all $t$. 
\end{proof} 

  Now note from the definition of $\mu_m(t)$ in (\ref{eq:mu}) that: 
\begin{eqnarray}
\expect{\mu_m(t) \left|\right.\bv{Q}(t)} &=& \expect{ \sum_{k=1}^K \beta_{mk}Z_k(t)D_k(t) \left|\right.\bv{Q}(t)} \nonumber \\
&=& \sum_{k=1}^K \beta_{mk}\expect{\expect{Z_k(t)D_k(t)\left|\right.\bv{Q}(t), P_k(t), Y(t)}\left|\right.\bv{Q}(t)} \nonumber \\
&=& \sum_{k=1}^K\beta_{mk}\expect{Z_k(t)F_k(P_k(t), Y(t))\left|\right.\bv{Q}(t)} \label{eq:foo} 
\end{eqnarray}
where we have used the law of iterated expectations in the final equality. Similarly, we have from the
definition of $\phi(t)$ in (\ref{eq:phi}): 
\begin{eqnarray}
\expect{\phi(t) \left|\right.\bv{Q}(t)} &=& -\expect{c(\bv{A}(t), X(t))\left|\right.\bv{Q}(t)} \nonumber \\
&& + \sum_{k=1}^K \expect{Z_k(t)(P_k(t)-\alpha_k)F_k(P_k(t), Y(t))\left|\right.\bv{Q}(t)} \label{eq:foo2} 
\end{eqnarray} 
 Plugging (\ref{eq:foo}) and (\ref{eq:foo2}) into (\ref{eq:drift-q}) yields: 
  \begin{eqnarray}
 \Delta(\bv{Q}(t)) - V\expect{\phi(t)\left|\right.\bv{Q}(t)} &\leq& B \nonumber \\
&& \hspace{-1.8in} +\sum_{m=1}^M (Q_m(t) - \theta_m)\expect{A_m(t) - \sum_{k=1}^K\beta_{mk}Z_k(t)F_k(P_k(t), Y(t)) \left|\right.\bv{Q}(t)} \nonumber \\
&& \hspace{-1.8in} +  V\expect{c(\bv{A}(t), X(t))\left|\right.\bv{Q}(t)} \nonumber \\
&& \hspace{-1.8in} - V \expect{  \sum_{k=1}^K Z_k(t)(P_k(t)-\alpha_k) F_k(P_k(t), Y(t))   \left|\right.\bv{Q}(t)} \label{eq:big-drift2} 
 \end{eqnarray}
 In particular, the right hand side of (\ref{eq:big-drift2}) is identical to the right hand side of (\ref{eq:drift-q}).

\subsection{The Dynamic Purchasing and Pricing Algorithm}

 Minimizing the right hand side of the (\ref{eq:big-drift2}) 
 over all feasible choices of $\bv{A}(t)$, 
 $\bv{Z}(t)$, $\bv{P}(t)$ (given the observed $X(t)$ and $Y(t)$ states and the observed
 queue values $\bv{Q}(t)$) yields the following algorithm:
 
 \emph{\underline{Joint Purchasing and Pricing Algorithm (JPP)}:} 
Every slot $t$,  perform the 
following actions: 
\begin{enumerate}
\item \emph{Purchasing:} 
Observe $\bv{Q}(t)$ and $X(t)$, and choose $\bv{A}(t)=(A_1(t), \ldots, A_M(t))$ as the solution  of the following
optimization problem defined for slot $t$:
\begin{eqnarray}
\mbox{Minimize:} &  Vc(\bv{A}(t), X(t))  + \sum_{m=1}^M A_m(t)(Q_m(t)-\theta_m)  \label{eq:purchase1}  \\
\mbox{Subject to:} & \bv{A}(t) \in \script{A}(X(t)) \label{eq:purchase2} 
\end{eqnarray}
where $\script{A}(X(t))$ is defined by constraints (\ref{eq:Amax})-(\ref{eq:cmax}).

\item \emph{Pricing:} Observe $\bv{Q}(t)$ and $Y(t)$. For each product $k \in \{1, \ldots, K\}$,  if $1_k(t) = 1$, choose $Z_k(t) = 0$
and do not offer product $k$ for sale.  If $1_k(t) = 0$, choose $P_k(t)$
as the solution to the following problem: 
\begin{eqnarray}
\mbox{Maximize:} && V(P_k(t)-\alpha_k)F_k(P_k(t), Y(t))  \nonumber \\
&& + F_k(P_k(t), Y(t))\sum_{m=1}^M\beta_{mk}(Q_m(t)-\theta_m) \label{eq:pricing1}  \\
\mbox{Subject to:} && P_k(t) \in \script{P}_k \label{eq:pricing2} 
\end{eqnarray}
If the above maximization is  positive, set $Z_k(t) = 1$ and keep $P_k(t)$
as the above value.  Else, set $Z_k(t) = 0$ and do not offer product $k$ for sale. 

\item \emph{Queue Update:} Fulfill all demands $D_k(t)$, and update the
queues $Q_m(t)$ according to (\ref{eq:dynamics1}) (noting by construction of this 
algorithm that $\tilde{\bv{D}}(t) = \bv{D}(t)$ for all $t$, so that the dynamics (\ref{eq:dynamics1}) 
are equivalent to (\ref{eq:dynamics})). 
\end{enumerate}

The above JPP algorithm does not require knowledge of probability distributions
$\pi(x)$ or $\pi(y)$, and is decoupled into separate policies for pricing and purchasing. 
The  pricing policy is quite simple and involves maximizing a (possibly
non-concave) function of one variable $P_k(t)$ over the 1-dimensional 
set $\script{P}_k$.  For example, if  $\script{P}_k$ is a discrete set of 1000 price options, this involves
evaluating the function over each option and choosing the maximizing price. The 
purchasing policy is more complex, as $\bv{A}(t)$ is an integer vector that must satisfy 
(\ref{eq:Amax})-(\ref{eq:cmax}). This is a \emph{knapsack problem} due to the constraint (\ref{eq:cmax}). 
However, the decision is trivial in the case when $c_{max} = \sum_{m=1}^Mx_{m,max}A_{m,max}$,
in which case the constraint (\ref{eq:cmax}) is redundant.

\subsection{Deterministic Queue Bounds} 

We have the following simple lemma that shows the above policy can be implemented
on a finite buffer system. 
\begin{lem} \label{lem:finite-buffer} (Finite Buffer Implementation) 
If initial inventory satisfies $Q_m(0) \leq Q_{m, max}$ for all $m \in \{1, \ldots, M\}$, where
$Q_{m, max} \defequiv \theta_m + A_{m,max}$, 
then JPP yields  $Q_m(t) \leq Q_{m, max}$ for all slots $t \geq 0$ and all queues $m\in\{1, \ldots, M\}$. 
\end{lem}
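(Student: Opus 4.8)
The plan is to prove the bound by induction on the slot index $t$, with the base case supplied directly by the hypothesis $Q_m(0) \leq Q_{m,max}$. All the content lives in the inductive step, and it hinges on one structural property of the purchasing step of JPP, which I would establish first.

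\textbf{Key observation on purchasing.} Since $c(\bv{A}(t), X(t)) = \sum_m x_m(t) A_m(t)$ with $x_m(t) \geq 0$ (see (\ref{eq:example-cost})), the purchasing objective (\ref{eq:purchase1}), as far as its dependence on $A_m(t)$ is concerned, contains the term $A_m(t)\left[V x_m(t) + (Q_m(t) - \theta_m)\right]$. Whenever $Q_m(t) > \theta_m$ this bracket is strictly positive, so raising $A_m(t)$ above $0$ strictly increases the objective. Because $A_m(t) = 0$ is always feasible --- it satisfies the box constraint (\ref{eq:Amax}), the integrality constraint (\ref{eq:integer-a}), and can only relax the cost cap (\ref{eq:cmax}) --- any minimizer must choose $A_m(t) = 0$. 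Thus JPP guarantees
\[ Q_m(t) > \theta_m \Rightarrow A_m(t) = 0 . \]

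\textbf{Inductive step.} Assuming $Q_m(t) \leq Q_{m,max} = \theta_m + A_{m,max}$, I would split on the value of $Q_m(t)$. If $Q_m(t) \leq \theta_m$, then using $\mu_m(t) \geq 0$ and $A_m(t) \leq A_{m,max}$ (from (\ref{eq:Amax})) in the dynamics (\ref{eq:dynamics1}) gives $Q_m(t+1) \leq Q_m(t) + A_m(t) \leq \theta_m + A_{m,max} = Q_{m,max}$. If instead $Q_m(t) > \theta_m$, the key observation forces $A_m(t) = 0$, so $Q_m(t+1) = Q_m(t) - \mu_m(t) \leq Q_m(t) \leq Q_{m,max}$ by $\mu_m(t) \geq 0$ and the inductive hypothesis. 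In either case $Q_m(t+1) \leq Q_{m,max}$, closing the induction.

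The main subtlety --- and the only place where care is required --- is that the purchasing subproblem is \emph{not} separable across materials, because the cost cap (\ref{eq:cmax}) couples the $A_m(t)$ into a joint knapsack-type feasible region $\script{A}(X(t))$. The argument for $A_m(t) = 0$ must therefore be made coordinatewise rather than by decomposing the problem: the point is that lowering a single $A_m(t)$ to zero never violates feasibility (it only slackens (\ref{eq:cmax})) while strictly lowering the objective when $Q_m(t) > \theta_m$, so no optimal purchase vector can have $A_m(t) > 0$ on an over-threshold queue. Once this is in hand, the induction is routine and uses only the elementary bounds already assumed on $A_m(t)$ and $\mu_m(t)$.
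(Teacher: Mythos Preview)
Your proposal is correct and follows essentially the same approach as the paper: both argue by induction on $t$, using the key fact that the purchasing rule sets $A_m(t)=0$ whenever $Q_m(t)>\theta_m$ (because the objective in (\ref{eq:purchase1}) is then strictly increasing in $A_m(t)$), and then bound $Q_m(t+1)$ by splitting on whether $Q_m(t)$ is above or below $\theta_m$. Your version is in fact somewhat more careful than the paper's, since you explicitly address the knapsack coupling in (\ref{eq:cmax}) and note that zeroing a single coordinate preserves feasibility.
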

\begin{proof} 
Fix a queue $m$, and suppose that $Q_m(t) \leq Q_{m, max}$ for some slot $t$ (this holds by assumption at $t=0$). 
We show that $Q_m(t+1) \leq Q_{m, max}$.  To see this, note that because the 
function  $c(\bv{A}(t), X(t))$ in (\ref{eq:example-cost}) 
is non-decreasing in every entry of $\bv{A}(t)$ for each 
$X(t)\in \script{X}$, the purchasing policy (\ref{eq:purchase1})-(\ref{eq:purchase2})
yields $A_m(t) = 0$ for any queue $m$ that satisfies
$Q_m(t) > \theta_m$.    
It follows that $Q_m(t)$ cannot increase if it is greater than $\theta_m$, 
and so $Q_m(t+1) \leq \theta_m  + A_{m, max}$ (because $A_{m,max}$ is the maximum amount of 
increase for queue $m$ on any slot).
 \end{proof} 
 
 The following important related lemma shows that queue sizes $Q_m(t)$ are always above $\mu_{m, max}$,
  provided that they start with at least this much raw
  material and that the $\theta_m$ values are chosen to be
  sufficiently large.  Specifically, define $\theta_m$ as follows: 
   \begin{equation} \label{eq:good-theta} 
  \theta_m \defequiv \max_{\{k \in\{1, \ldots, K\} | \beta_{mk}>0\}} \left[\frac{V(P_{k,max} - \alpha_k)}{\beta_{mk}} + 
  \sum_{i\in\{1, \ldots, M\} , i \neq m} \frac{\beta_{ik}A_{i,max}}{\beta_{mk}} + 2\mu_{m,max}   \right]
 \end{equation} 
  \begin{lem} \label{lem:lower-bound} Suppose that $\theta_m$ is defined by (\ref{eq:good-theta}), 
  and that  $Q_m(0) \geq \mu_{m,max}$ 
  for all $m \in \{1, \ldots, M\}$. Then for all slots $t\geq0$ we have: 
 \[ Q_m(t) \geq \mu_{m,max}  \: \: \forall m \in\{1, \ldots, M\} \]
  \end{lem}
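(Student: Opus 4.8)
The plan is to argue by induction on $t$. The base case holds by hypothesis, since $Q_m(0) \geq \mu_{m,max}$ for all $m$. So I would fix a slot $t$, assume the induction hypothesis $Q_m(t) \geq \mu_{m,max}$ for all $m$, and show the same bound survives to slot $t+1$ for an arbitrary fixed queue $m$. A key preliminary observation is that the induction hypothesis forces $1_k(t) = 0$ for \emph{every} product $k$ (no relevant queue sits below $\mu_{m,max}$), so the pricing step of JPP always enters its maximization branch, and the dynamics are exactly (\ref{eq:dynamics1}) with departures $\mu_m(t) = \sum_{k=1}^K \beta_{mk} Z_k(t) D_k(t) \leq \mu_{m,max}$.

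I would then split on the size of $Q_m(t)$. If $Q_m(t) > 2\mu_{m,max}$, the bound is immediate: since $\mu_m(t) \leq \mu_{m,max}$ deterministically and $A_m(t) \geq 0$, we get $Q_m(t+1) \geq Q_m(t) - \mu_{m,max} > \mu_{m,max}$. The substantive case is $\mu_{m,max} \leq Q_m(t) \leq 2\mu_{m,max}$, where the goal is to show that JPP declines to sell any product that draws on queue $m$, i.e. $Z_k(t) = 0$ for every $k$ with $\beta_{mk} > 0$. If this holds then $\mu_m(t) = 0$, so $Q_m(t+1) = Q_m(t) + A_m(t) \geq Q_m(t) \geq \mu_{m,max}$, completing the step.

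To establish $Z_k(t) = 0$ in the substantive case, I would inspect the pricing objective (\ref{eq:pricing1}), namely $F_k(P_k(t),Y(t))\bigl[V(P_k(t)-\alpha_k) + \sum_{i=1}^M \beta_{ik}(Q_i(t)-\theta_i)\bigr]$. Because $F_k \geq 0$ and $P_k(t)-\alpha_k \leq P_{k,max}-\alpha_k$, the bracket is bounded above by $V(P_{k,max}-\alpha_k) + \beta_{mk}(Q_m(t)-\theta_m) + \sum_{i\neq m}\beta_{ik}(Q_i(t)-\theta_i)$. Here I would invoke Lemma \ref{lem:finite-buffer} to replace each cross term by $Q_i(t)-\theta_i \leq A_{i,max}$. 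Substituting the definition (\ref{eq:good-theta}) of $\theta_m$, which is engineered so that $\theta_m - V(P_{k,max}-\alpha_k)/\beta_{mk} - \sum_{i\neq m}\beta_{ik}A_{i,max}/\beta_{mk} \geq 2\mu_{m,max}$ for each $k$ with $\beta_{mk}>0$, together with $Q_m(t) \leq 2\mu_{m,max}$, drives the bracket to be nonpositive. The maximized objective is then nonpositive, and JPP sets $Z_k(t) = 0$, as needed.

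The main obstacle is precisely this substantive case: the sign of the pricing objective is controlled not by queue $m$ alone but by the competing contributions of the other queues $i \neq m$, so the argument genuinely requires the upper-buffer bound of Lemma \ref{lem:finite-buffer}, and the ``$+\,2\mu_{m,max}$'' slack built into (\ref{eq:good-theta}) is exactly calibrated to absorb both the maximal price term and the aggregated cross-queue terms. I would therefore flag the standing assumption that the initial inventory also satisfies $Q_m(0) \leq Q_{m,max}$, so that Lemma \ref{lem:finite-buffer} is in force throughout the induction.
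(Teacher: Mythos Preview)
Your proposal is correct and mirrors the paper's own proof almost exactly: induction on $t$, a split at $2\mu_{m,max}$, and in the low-backlog case an upper bound on the pricing objective using $Q_i(t)-\theta_i \leq A_{i,max}$ from Lemma~\ref{lem:finite-buffer} together with the definition~(\ref{eq:good-theta}) to force $Z_k(t)=0$ whenever $\beta_{mk}>0$. Your explicit observation that the simultaneous induction hypothesis makes $1_k(t)=0$ (and your flag that Lemma~\ref{lem:finite-buffer} needs $Q_m(0)\leq Q_{m,max}$) are both correct and slightly more careful than the paper, but the argument is otherwise identical.
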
  
  \begin{proof} 
  Fix $m \in \{1, \ldots, M\}$, and 
    suppose that $Q_m(t) \geq \mu_{m,max}$ for some slot $t$ (this holds by assumption for $t=0$). We
  prove it also holds for slot $t+1$.  If $Q_m(t) \geq 2\mu_{m,max}$, then $Q_m(t+1) \geq \mu_{m,max}$
  (because at most $\mu_{m,max}$ units can depart queue $m$ on any slot), and hence we are done. 
  Suppose now that $\mu_{m,max} \leq Q_m(t) < 2\mu_{m,max}$. 
  In this case the pricing functional in  (\ref{eq:pricing1}) satisfies the following for any 
   product $k$ such that 
  $\beta_{mk} >0$:  
  \begin{eqnarray}
  &&\hspace{-.3in}V(P_k(t) - \alpha_k)F_k(P_k(t), Y(t)) +  F_k(P_k(t), Y(t))\sum_{i=1}^M\beta_{ik}(Q_i(t)-\theta_i)  \nonumber \\
  &\leq& F_k(P_k(t),Y(t)) \times \nonumber \\
  && \left[V(P_{k,max} - \alpha_k) + \sum_{i\in\{1, \ldots, M\} ,i \neq m}\beta_{ik}A_{i,max} + \beta_{mk}(2\mu_{m,max}-\theta_m)\right] \label{eq:edge2} \\
  &\leq& 0 \label{eq:edge3} 
  \end{eqnarray}
  where in (\ref{eq:edge2}) 
  we have used the fact that $(Q_i(t) - \theta_i) \leq A_{i,max}$ for all queues $i$ (and in particular 
  for all $i \neq m$) by 
  Lemma \ref{lem:finite-buffer}.  In (\ref{eq:edge3}) we have used the definition of $\theta_m$ in (\ref{eq:good-theta}). 
  It follows that the pricing rule (\ref{eq:pricing1})-(\ref{eq:pricing2}) sets $Z_k(t) = 0$ for all products $k$ that
  use raw material $m$, and so no departures can take place from $Q_m(t)$ on the current slot. 
  Thus: $\mu_{m,max} \leq Q_m(t) \leq Q_m(t+1)$, and we are done. 
  \end{proof}

\subsection{Performance Analysis of JPP}

 \begin{thm} \label{thm:performance1} (Performance of JPP)  
 Suppose that $\theta_m$ is defined by (\ref{eq:good-theta}) for all $m \in \{1, \ldots, M\}$, 
 and that $\mu_{m,max} \leq Q_m(0) \leq \theta_m + A_{m,max}$ for all $m \in \{1, \ldots, M\}$.  Suppose
 $X(t)$ and $Y(t)$ are i.i.d. over slots. 
 Then under the JPP algorithm implemented with any parameter $V>0$, we have: 
 
 (a)  For all $m \in \{1, \ldots, M\}$ and all slots $t \geq 0$: 
 \[ \mu_{m,max} \leq Q_m(t) \leq Q_{m, max} \defequiv \theta_{m,max} + A_{m,max}  \]
 where $Q_{m,max} = O(V)$. 
 
 (b) For all slots $t>0$ we have: 
\begin{equation} 
\frac{1}{t}\sum_{\tau=0}^{t-1}\expect{\phi_{actual}(\tau)} \geq \phi^{opt} - \frac{B}{V} - \frac{\expect{L(\bv{Q}(0))}}{Vt} 
\end{equation}  
 where the constant $B$ is defined in (\ref{eq:B}) and is independent of $V$, and where
 $\phi^{opt}$ is the optimal time average profit defined in Theorem \ref{thm:max-profit}.  
 
(c) The time average profit converges with probability 1, and satisfies: 
\begin{equation} 
\lim_{t\rightarrow\infty} \frac{1}{t}\sum_{\tau=0}^{t-1} \phi_{actual}(\tau) \geq \phi^{opt} - \frac{B}{V} \: \: \: \: \mbox{(with probability 1)} 
\end{equation}  
\end{thm}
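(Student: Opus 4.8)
The plan is to prove the three parts in order, with parts (b) and (c) both resting on a single one-slot ``drift-plus-penalty'' bound. Part (a) is immediate from the two deterministic queue lemmas already established. The hypothesis $Q_m(0) \leq \theta_m + A_{m,max}$ lets me invoke Lemma~\ref{lem:finite-buffer} for the upper bound $Q_m(t) \leq \theta_m + A_{m,max} = Q_{m,max}$, while the hypothesis $Q_m(0) \geq \mu_{m,max}$ together with the choice of $\theta_m$ in (\ref{eq:good-theta}) lets me invoke Lemma~\ref{lem:lower-bound} for the lower bound $Q_m(t) \geq \mu_{m,max}$. The $O(V)$ claim is read off directly from (\ref{eq:good-theta}): only the term $V(P_{k,max}-\alpha_k)/\beta_{mk}$ scales with $V$, all others being $V$-independent constants, so $\theta_m = O(V)$ and hence $Q_{m,max} = O(V)$.

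For part (b), first I would note the key structural consequence of part (a): since $Q_m(t) \geq \mu_{m,max}$ for every $m$ and every $t$, the indicator $1_k(t)$ in (\ref{eq:edge}) is identically zero, so the edge constraint (\ref{eq:edge-constraint}) is vacuous and there is always sufficient inventory to fill every realized demand; consequently $\tilde{\bv{D}}(t) = \bv{D}(t)$ and $\phi_{actual}(t) = \phi(t)$ for all $t$. Next I would establish the one-slot bound
\[ \Delta(\bv{Q}(t)) - V\expect{\phi(t)|\bv{Q}(t)} \leq B - V\phi^{opt}. \]
This is where the design of JPP does its work: the purchasing objective (\ref{eq:purchase1}) and the per-product pricing objective (\ref{eq:pricing1}) are exactly the $\bv{A}$-dependent and $(\bv{Z},\bv{P})$-dependent terms obtained by regrouping the right-hand side of (\ref{eq:big-drift2}), so JPP minimizes that right-hand side over all feasible decisions at the observed $\bv{Q}(t)$. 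I would then plug in, as a competing feasible choice, the $(X,Y)$-only policy of Corollary~\ref{cor:1}. Because its decisions depend only on $X(t),Y(t)$, which are i.i.d.\ and independent of $\bv{Q}(t)$, the conditional expectations in (\ref{eq:big-drift2}) collapse to unconditional ones; the queue-weighted term vanishes by (\ref{eq:a-stat1}) (since $\expect{A_m^*(t)} = \expect{\mu_m^*(t)}$), and the remaining cost-minus-revenue terms collapse to $-V\phi^{opt}$ by (\ref{eq:phi-stat1}). This yields the displayed bound. Taking expectations, using $\Delta(\bv{Q}(t)) = \expect{L(\bv{Q}(t+1)) - L(\bv{Q}(t))|\bv{Q}(t)}$ and iterated expectations, summing over $\tau \in \{0,\ldots,t-1\}$ to telescope the Lyapunov terms, dropping the nonnegative $\expect{L(\bv{Q}(t))}$, and dividing by $Vt$ gives part (b), with $\phi_{actual} = \phi$ from the first step.

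For part (c) I would upgrade the expectation bound to an almost-sure one via a martingale argument that crucially exploits the deterministic boundedness from part (a). Summing the exact sample-path identity $L(\bv{Q}(\tau+1)) - L(\bv{Q}(\tau)) = B(\tau) + \sum_{m}(Q_m(\tau)-\theta_m)(A_m(\tau)-\mu_m(\tau))$ (from the proof of Lemma~\ref{lem:drift-comp}) after subtracting $V\phi(\tau)$, I would set $W(\tau) \defequiv L(\bv{Q}(\tau+1)) - L(\bv{Q}(\tau)) - V\phi(\tau)$, so that $\frac{1}{t}\sum_{\tau=0}^{t-1} W(\tau) = \frac{L(\bv{Q}(t)) - L(\bv{Q}(0))}{t} - \frac{V}{t}\sum_{\tau=0}^{t-1}\phi(\tau)$. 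The one-slot bound gives $\expect{W(\tau)|\bv{Q}(\tau)} \leq B - V\phi^{opt}$, so $W(\tau) - \expect{W(\tau)|\bv{Q}(\tau)}$ is a martingale difference sequence; by part (a) every queue, and hence $L(\bv{Q}(\tau))$, $B(\tau)$, and $\phi(\tau)$, is uniformly bounded, so these differences have uniformly bounded magnitude. The strong law of large numbers for martingale differences then forces their running average to $0$ almost surely, whence $\limsup_t \frac{1}{t}\sum_\tau W(\tau) \leq B - V\phi^{opt}$ w.p.1. Since $L$ is bounded, $\frac{L(\bv{Q}(t)) - L(\bv{Q}(0))}{t} \to 0$, and rearranging the identity above yields $\liminf_t \frac{1}{t}\sum_\tau \phi(\tau) \geq \phi^{opt} - B/V$ almost surely; that the limit in fact exists follows from the ergodicity of the bounded queue process driven by i.i.d.\ $(X(t),Y(t))$. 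Using $\phi_{actual} = \phi$ completes the proof. The main obstacle I anticipate is this almost-sure step: the expectation bound of part (b) does not by itself control sample paths, and the martingale SLLN can only be invoked because part (a) supplies a deterministic (not merely in-probability) bound on the queues, which in turn bounds the martingale increments; establishing the matching upper bound needed for genuine convergence rather than just the $\liminf$ inequality is the other delicate point, and I would lean on the ergodic structure of the i.i.d.-driven bounded Markov chain for it.
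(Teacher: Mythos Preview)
Your treatment of parts (a) and (b) matches the paper's proof essentially line for line: part (a) is the conjunction of Lemmas~\ref{lem:finite-buffer} and~\ref{lem:lower-bound}, and part (b) proceeds by plugging the $(X,Y)$-only policy of Corollary~\ref{cor:1} into the minimized right-hand side of the drift-plus-penalty bound, using i.i.d.\ independence to strip the conditioning, telescoping, and dividing by $Vt$.

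For part (c) you take a genuinely different route. The paper argues as follows: under JPP with i.i.d.\ $(X(t),Y(t))$, the integer-valued bounded vector $\bv{Q}(t)$ is a \emph{finite-state} Markov chain; starting from any recurrent state the time average of $\phi(t)$ therefore converges with probability~1 to a deterministic constant $\overline{\phi}(\bv{Q}(0))$; since $\phi(\tau)$ is bounded, dominated convergence identifies this constant with $\lim_t \frac{1}{t}\sum_{\tau<t}\expect{\phi(\tau)|\bv{Q}(0)}$, which by part (b) is at least $\phi^{opt}-B/V$; transient initial states eventually hit a recurrent class. Your approach instead extracts the sample-path $\liminf$ bound directly via the martingale strong law applied to $W(\tau)-\expect{W(\tau)\,|\,\bv{Q}(\tau)}$, using part (a) to guarantee uniformly bounded increments, and only afterward appeals to the Markov/ergodic structure to upgrade the $\liminf$ to a genuine limit. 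Both arguments are correct. Your martingale route has the advantage that the $\liminf$ bound needs only boundedness of $\bv{Q}(t)$, not the integer-valued finite-state structure, so it would survive in models with real-valued (but bounded) inventories; the paper's route is a bit more self-contained because it uses the finite-state Markov property once to obtain both the existence of the limit and its identification with the expectation bound, avoiding any martingale machinery. Note, however, that your final convergence step (``lean on the ergodic structure'') ultimately relies on the same finite-state Markov observation the paper makes explicit, so the two proofs converge at that point.
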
 
 
 Thus, the time average profit is within $O(1/V)$ of optimal, and hence can be pushed arbitrarily close
 to optimal by increasing $V$, with a tradeoff in the maximum
 buffer size that is $O(V)$.    Defining $\epsilon = B/V$ yields the desired $[O(\epsilon), O(1/\epsilon)]$ profit-buffer
 size tradeoff. 
   
\begin{proof}  (Theorem \ref{thm:performance1} parts (a) and (b)) 
Part (a) follows immediately from Lemmas \ref{lem:finite-buffer} and \ref{lem:lower-bound}. 
To prove part (b), note that JPP observes $\bv{Q}(t)$ and 
makes control decisions for $Z_k(t)$, $\bv{A}(t)$, $P_k(t)$ 
that minimize the right hand side of (\ref{eq:drift-q}) 
under any alternative choices. Thus: 
  \begin{eqnarray}
 \Delta(\bv{Q}(t)) - V\expect{\phi(t)\left|\right.\bv{Q}(t)} &\leq& B - V\expect{\phi^*(t)|\bv{Q}(t)}  \nonumber \\
 && + \sum_{m=1}^M(Q_m(t)-\theta_m)\expect{A_m^*(t)-\mu_m^*(t)|\bv{Q}(t)} 
 \label{eq:big-drift3} 
 \end{eqnarray}
 where $\expect{\phi^*(t)|\bv{Q}(t)}$, and $\expect{\mu_m^*(t)|\bv{Q}(t)}$ correspond to 
 any alternative choices for the decision variables 
 $Z_k^*(t)$, $P_k^*(t)$, $A_m^*(t)$ subject to the same constraints, being that 
 $P_k^*(t) \in \script{P}_k$, $A_m^*(t)$ satisfies (\ref{eq:Amax})-(\ref{eq:cmax}), and 
 $Z_k^*(t) \in \{0, 1\}$, and $Z_k^*(t) = 0$ whenever $1_k(t) = 1$. Because $Q_m(t) \geq \mu_{m,max}$ for all
 $m$, we have $1_k(t)=0$ for all $k \in \{1, \ldots, K\}$ (where $1_k(t)$ is defined in (\ref{eq:edge})).  Thus, 
 the $(X,Y)$-only policy of Corollary \ref{cor:1} satisfies the desired constraints.  Further, this policy 
 makes decisions based only on $(X(t), Y(t))$, which are i.i.d. over slots and hence independent
 of the current queue state $\bv{Q}(t)$.  Thus, from (\ref{eq:phi-stat1})-(\ref{eq:a-stat1}) we have: 
 \begin{eqnarray*}
 \expect{\phi^*(t)|\bv{Q}(t)}  &=& \expect{\phi^*(t)} = \phi^{opt} \\
 \expect{A_m^*(t) - \mu_m^*(t)|\bv{Q}(t)} &=& \expect{A_m^*(t) - \mu_m^*(t)} = 0 \: \: \forall m \in \{1, \ldots, M\} 
 \end{eqnarray*}
 Plugging the above two identities into the right hand side of  (\ref{eq:big-drift3}) yields:  
   \begin{eqnarray}
 \Delta(\bv{Q}(t)) - V\expect{\phi(t)\left|\right.\bv{Q}(t)} &\leq& B -V\phi^{opt}
\label{eq:big-drift4} 
 \end{eqnarray}
 Taking expectations of the above and using the definition of $\Delta(\bv{Q}(t))$ yields: 
 \[ \expect{L(\bv{Q}(t+1))} - \expect{L(\bv{Q}(t))} - V\expect{\phi(t)} \leq B - V\phi^{opt} \]
The above holds for all slots $t\geq0$.  Summing over $\tau\in \{0, 1, \ldots, t-1\}$ for some integer $t>0$
yields: 
\begin{equation*} 
 \expect{L(\bv{Q}(t))} - \expect{L(\bv{Q}(0))} - V\sum_{\tau=0}^{t-1}\expect{\phi(\tau)} \leq Bt- Vt\phi^{opt} 
 \end{equation*} 
Dividing by $tV$, rearranging terms, and using non-negativity of $L(\bv{Q}(t))$ yields: 
\begin{equation} \label{eq:here} 
 \frac{1}{t}\sum_{\tau=0}^{t-1} \expect{\phi(\tau)} \geq \phi^{opt} - \frac{B}{V} - \frac{\expect{L(\bv{Q}(0))}}{Vt} 
 \end{equation} 
 Because $1_k(t) = 0$ for all $k$ and all $\tau$, we have $\phi(\tau) = \phi_{actual}(\tau)$ for all $\tau$, and we
are done. 
\end{proof}

\begin{proof} (Theorem \ref{thm:performance1} part (c)) Taking a limit of (\ref{eq:here}) proves that: 
\[ \liminf_{t\rightarrow\infty} \frac{1}{t}\sum_{\tau=0}^{t-1} \expect{\phi(\tau)} \geq \phi^{opt} - B/V \]
The above result made no assumption on the initial distribution of $\bv{Q}(0)$. Thus, letting $\bv{Q}(0)$ be any particular initial state, we have: 
\begin{equation} \label{eq:recurrent} 
 \liminf_{t\rightarrow\infty} \frac{1}{t}\sum_{\tau=0}^{t-1}\expect{\phi(\tau)|\bv{Q}(0)} \geq \phi^{opt} - B/V 
 \end{equation} 
However, under the JPP algorithm
the process  
$\bv{Q}(t)$ is a discrete time Markov chain with finite state space (because it is an integer valued vector
with finite bounds given in part (a)).  
Suppose now the initial condition $\bv{Q}(0)$ is a recurrent state. It follows that the time average of
$\phi(t)$ must converge to a 
well defined constant $\overline{\phi}(\bv{Q}(0))$  with probability 1 (where the constant may depend on the initial recurrent
state $\bv{Q}(0)$ that is chosen).  
Further, because $\phi(\tau)$ is bounded above and below by finite constants for all 
$\tau$, by the Lebesgue dominated convergence theorem  we have: 
\[ \lim_{t\rightarrow\infty} \frac{1}{t}\sum_{\tau=0}^{t-1}\expect{\phi(\tau)|\bv{Q}(0)} = \overline{\phi}(\bv{Q}(0)) \]
Using this in (\ref{eq:recurrent}) yields: 
\[ \overline{\phi}(\bv{Q}(0)) \geq \phi^{opt} - B/V \]
This is true for all initial states that are recurrent.  If $\bv{Q}(0)$ is a transient state, then $\bv{Q}(t)$ eventually
reaches a recurrent state and hence achieves a time average that is, with probability 1, greater than or 
equal to $\phi^{opt} - B/V$.  
\end{proof} 

\subsection{Place-Holder Values} 

Theorem \ref{thm:performance1} seems to require the initial queue values to satisfy
$Q_m(t) \geq \mu_{m,max}$ for all $t$.  This suggests that we need to purchase that many
raw materials before start-up. Here we use the \emph{place holder backlog} 
technique of \cite{neely-asilomar08} to show that we can achieve the same performance without this
initial start-up cost, without loss of optimality.   The technique also allows us to reduce our maximum
buffer size requirement $Q_{m,max}$ by an amount $\mu_{m,max}$ for all $m \in \{1, \ldots, M\}$, with
no loss in performance. 

To do this, we start the system off with exactly $\mu_{m,max}$
units of \emph{fake raw material} in each queue $m \in \{1, \ldots, M\}$.  Let $Q_m(t)$ be the total raw
material in queue $m$, including both the actual and fake material.  Let $Q^{actual}_m(t)$ be the amount
of actual raw material in queue $m$.  Then for slot $0$ we have: 
\[ Q_m(0) = Q_m^{actual}(0) + \mu_{m,max} \: \: \: \forall m \in \{1, \ldots, M\} \]
Assume that $\mu_{m,max} \leq Q_m(0) \leq \theta_m + A_{m,max}$ for all $m \in \{1, \ldots, M\}$, as needed
for Theorem \ref{thm:performance1}. Thus, $0 \leq Q_m^{actual}(0) \leq \theta_m + A_{m,max} - \mu_{m,max}$
(so that the actual initial condition can be 0). We run the JPP algorithm as before, using the $\bv{Q}(t)$ values
(not the actual queue values).  However, if we are ever asked to assemble a product, we use \emph{actual} 
raw materials whenever possible.  The only problem comes if we are asked to assemble a product for which
there are not enough actual raw materials available.  However, we know from Theorem \ref{thm:performance1}
that the queue value $Q_m(t)$ never decreases below $\mu_{m,max}$ for any $m\in\{1, \ldots, M\}$. 
It follows that we are \emph{never} asked to use any of our fake raw material.  Therefore, the fake raw
material stays untouched in each queue for all time, and we have: 
\begin{equation} \label{eq:fake-backlog} 
 Q_m(t) = Q_m^{actual}(t) + \mu_{m,max} \: \: \forall m \in \{1, \ldots, M\}, \forall t\geq 0 
 \end{equation} 
The sample path of the system is equivalent to a sample path that does not use fake raw material, but
starts the system in the non-zero initial condition $\bv{Q}(0)$.  Hence, the resulting profit achieved is the
same.  Because the limiting time average profit does not depend on the initial condition, the time average
profit it still at least $\phi^{opt} - B/V$.  However, by (\ref{eq:fake-backlog}) the actual amount of raw material
held is reduced by exactly $\mu_{m,max}$ on each slot, which also reduces the maximum buffer
size $Q_{m,max}$ by exactly this amount. 

\subsection{Demand-Blind Pricing} 

As in \cite{two-prices-allerton07} for the service provider problem, here we consider
the special case when the demand function $F_k(P_k(t), Y(t))$ has the form: 
\begin{equation} \label{eq:demand-blind} 
F_k(P_k(t), Y(t)) = h_k(Y(t))\hat{F}_k(P_k(t)) 
\end{equation} 
for some non-negative functions $h_k(Y(t))$ and $\hat{F}_k(P_k(t))$.  This
holds, for example, when $Y(t)$ represents the integer number of customers at time $t$, 
and $\hat{F}_k(p)$ is the expected demand at price $p$ for each customer, 
so that $F_k(P_k(t),Y(t)) = Y(t)\hat{F}_k(P_k(t))$.  
Under the structure (\ref{eq:demand-blind}),  
the JPP pricing algorithm (\ref{eq:pricing1}) reduces to choosing $P_k(t) \in \script{P}_k$
to maximize: 
\[ V(P_k(t) - \alpha_k)\hat{F}_k(P_k(t)) + \hat{F}_k(P_k(t))\sum_{m=1}^M\beta_{mk}(Q_m(t)-\theta_m) \]
Thus, the pricing can be done without knowledge of the demand state $Y(t)$.

\subsection{Extension to 1-slot Assembly Delay}  \label{subsection:assembly-delay}

Consider now the modified situation where products require one slot for assembly, 
but where consumers still require a product to be provided on the same slot in 
which it is purchased. This can easily be accommodated by  
maintaining an additional set of $K$ \emph{product queues} for
storing finished products.  Specifically, each product queue $k$ is initialized with $D_{k,max}$ 
units of finished products.  The plant also keeps the same material queues $\bv{Q}(t)$ 
as before, and makes all control decisions exactly as before (ignoring the product queues), 
so that every sample path of
queue levels  and control variables 
is the same as before.  
However, when new products are purchased, the consumers do not
wait for assembly, but take the corresponding amount of products out of the product queues. 
This exact amount is replenished when the new products complete their assembly at the 
end of the slot.  Thus, at the beginning of every slot there are always 
exactly $D_{k,max}$ units 
of type $k$ products in the product queues.  The total profit is then the same as before, with the 
exception that the plant incurs a fixed startup cost associated with initializing 
all product queues with a full amount
of finished products. 

\subsection{Extension to Price-Vector Based Demands} 

Suppose that the demand function $F_k(P_k(t), Y(t))$ associated with product $k$ is 
changed to a function $F_k(\bv{P}(t), \bv{Z}(t), Y(t))$  that depends on the full price
vectors $\bv{Z}(t)$ and $\bv{P}(t)$.   This does not significantly 
change the analysis of the Maximum
Profit Theorem (Theorem \ref{thm:max-profit}) or of the dynamic control policy of this 
section.  Specifically, the maximum time average profit $\phi^{opt}$ is still characterized
by randomized $(X,Y)$-only 
algorithms, with the exception that the new price-vector based
demand function 
is used in the optimization of Theorem \ref{thm:max-profit}.  Similarly, the dynamic
control policy of this section 
is only changed by replacing the original demand function 
with the new demand function.  

However, the 2-price result of Theorem \ref{thm:two-price} would no longer apply 
in this setting.  This is because Theorem \ref{thm:two-price} uses 
a strategy of independent pricing
that only applies if the demand function for product $k$ depends only on $P_k(t)$ and
$Y(t)$.  In the case when demands are affected by the full price vector, a modified
analysis can show that $\phi^{opt}$ can be achieved over stationary randomized
algorithms that use at most $\min[K, M]+1$ price vectors $(\bv{Z}^{(i)}, \bv{P}^{(i)})$ for 
each demand state $Y(t) \in \script{Y}$.

\section{Ergodic Models} \label{section:ergodic} 
In this section, we consider the performance of the Joint Purchasing and Pricing Algorithm (JPP) under a more general class of non-i.i.d. consumer demand state $Y(t)$ and material supply state $X(t)$ processes that possess the \emph{decaying memory property} (defined below). In this case, the deterministic queue bounds in 
part (a)  of Theorem \ref{thm:performance1} still hold. This is because part (a) is a
\emph{sample path statement}, which holds under any arbitrary $Y(t)$ and $X(t)$ processes. 
Hence we only have to look at the profit performance. 

\subsection{The Decaying Memory Property}
In this case, we first assume that the stochastic processes $Y(t)$ and $X(t)$ both have well defined time averages. Specifically, we assume that:
\begin{eqnarray}
\lim_{t\rightarrow\infty}\frac{1}{t}\sum_{\tau=0}^{t-1}1\{Y(t)=y\}=\pi(y)\,\,\text{with probability 1}\label{eq:ergodicity_y}\\
\lim_{t\rightarrow\infty}\frac{1}{t}\sum_{\tau=0}^{t-1}1\{X(t)=x\}=\pi(x)\,\,\text{with probability 1},\label{eq:ergodicity_x}
\end{eqnarray}
where $\pi(y)$ and $\pi(x)$ are the same as in the i.i.d. case for all $x$ and $y$. 
Now we consider implementing the $(X, Y)$-only policy in Corollary \ref{cor:1}. Because this policy makes decisions 
every slot purely as a function 
of the current states $X(t)$ and $Y(t)$, and because the limiting fractions of time of 
being in states $x, y$ are the same as in the i.i.d. case, we see that Corollary \ref{cor:1} still 
holds if we take the limit as $t$ goes to infinity, i.e.:
\begin{eqnarray}
\phi^{opt} &=&\lim_{t\rightarrow\infty}\frac{1}{t}\sum_{\tau=0}^{t-1}\expect{\phi^*(\tau)}\label{eq:ergodic_profit}\\
0&=&\lim_{t\rightarrow\infty}\frac{1}{t}\sum_{\tau=0}^{t-1}\expect{A^*_m(\tau)-\sum_{k=1}^K\beta_{mk}Z^*_k(\tau)F_k(P^*_k(\tau), Y(\tau))}\quad\forall m  \label{eq:ergodic_rates}
\end{eqnarray}
where $\expect{\phi^*(\tau)}$, $\expect{A^*_m(\tau)}$ and $\expect{\sum_{k=1}^K\beta_{mk}Z^*_k(\tau)F_k(P^*_k(\tau), Y(\tau))}$ are defined as in Corollary \ref{cor:1}, with expectations taken over the 
distributions of $X(t)$ and $Y(t)$ at time $t$ and the possible randomness of the policy.

We now define $H(t)$ to be the system history up to time slot $t$ as follows:
\begin{eqnarray}
H(t)\triangleq \{X(\tau), Y(\tau)\}_{\tau=0}^{t-1}\cup \{[Q_m(\tau)]_{m=1}^M\}_{\tau=0}^t.\label{eq:history}
\end{eqnarray}
We say that the state processes $X(t)$ and $Y(t)$ have the \emph{decaying memory property} if for any small $\epsilon>0$, there exists a positive integer $T=T_{\epsilon}$, i.e., $T$ is a function of $\epsilon$, such that for any $t_0\in\{0, 1, 2, ...\}$ and any $H(t_0)$, the following holds under the $(X, Y)$-only policy in Corollary \ref{cor:1}: 
\begin{eqnarray}
\bigg|\phi^{opt}-\frac{1}{T}\sum_{\tau=t_0}^{t_0+T-1}\expect{ \phi^*(\tau)\left.|\right. H(t_0)} \bigg|\leq\epsilon.\label{eq:decaying_profit}\\
\bigg| \frac{1}{T}\sum_{\tau=t_0}^{t_0+T-1}\expect{A^*_m(\tau)-\sum_{k=1}^K\beta_{mk}Z^*_k(\tau)F_k(P^*_k(\tau), Y(\tau)) \left.|\right. H(t_0)} \bigg|\leq\epsilon\label{eq:decaying_rate}
\end{eqnarray}
It is easy to see that if $X(t)$ and $Y(t)$ both evolve according to a finite state ergodic Markov chain, then the above will be satisfied. If $X(t)$ and $Y(t)$ are i.i.d. over slots, then $T_{\epsilon} = 1$ for all $\epsilon\geq 0$. 

\subsection{Performance of JPP under the Ergodic Model}
We now present the performance result of JPP under this decaying memory property. 
\begin{thm}\label{theorem:JPP_ergodic}
Suppose the Joint Purchasing and Pricing Algorithm (JPP) is implemented, with $\theta_m$ satisfying condition (\ref{eq:good-theta}),  and that
$\mu_{m, max}\leq Q_m(0)\leq\theta_m+A_{m, max}$ for all $m\in\{1, 2, ..., M\}$. Then the queue backlog values $Q_m(t)$ for all $m\in\{1, \ldots, M\}$ satisfy part (a) of Theorem \ref{thm:performance1}. Further, for any $\epsilon>0$ and $T$ such that (\ref{eq:decaying_profit}) and (\ref{eq:decaying_rate}) hold, we have that:
\begin{eqnarray}
\liminf_{t\rightarrow\infty}\frac{1}{t} \sum_{\tau=0}^{t-1}\expect{\phi_{actual}(\tau) }\geq \phi^{opt}  - \frac{TB}{V}- \epsilon \left(1+\sum_{m=1}^M \frac{\max[\theta_m,A_{m,max}]}{V}\right), \label{eq:exp_Tdrift_final}
\end{eqnarray}
where $B$ is defined in (\ref{eq:B}).
\end{thm}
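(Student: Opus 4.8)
The plan is to replace the one-slot drift argument of Theorem \ref{thm:performance1} by a \emph{$T$-slot drift} argument over consecutive frames of length $T = T_\epsilon$, where $T$ is chosen so that the decaying memory bounds (\ref{eq:decaying_profit})--(\ref{eq:decaying_rate}) hold. The queue bounds in part (a) require no new work: Lemmas \ref{lem:finite-buffer} and \ref{lem:lower-bound} are pure sample-path statements that never invoke the i.i.d.\ assumption, so $\mu_{m,max}\leq Q_m(t)\leq \theta_m+A_{m,max}$ continues to hold for arbitrary $X(t), Y(t)$. As in the i.i.d.\ proof, this forces $1_k(t)=0$ for all $k,t$, so the edge constraint (\ref{eq:edge-constraint}) never binds and $\phi(\tau)=\phi_{actual}(\tau)$, and it also makes the $(X,Y)$-only policy $\bv{A}^*,\bv{Z}^*,\bv{P}^*$ of Corollary \ref{cor:1} a feasible competitor at every slot.

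First I would invoke the per-slot bound (\ref{eq:drift-q}), conditioned not merely on $\bv{Q}(\tau)$ but on the full history $\mathcal{F}_\tau$ observed through slot $\tau$ (the derivation of Lemma \ref{lem:drift-comp} only uses that $\bv{Q}(\tau)$ is known, so it survives this strengthening), together with the fact that JPP minimizes its right-hand side over all feasible controls. Comparing JPP against the policy of Corollary \ref{cor:1} gives, for each $\tau$ in a frame $[t_0,t_0+T-1]$,
\[ \expect{L(\bv{Q}(\tau+1))-L(\bv{Q}(\tau))|\mathcal{F}_\tau} - V\expect{\phi(\tau)|\mathcal{F}_\tau} \leq B - V\expect{\phi^*(\tau)|\mathcal{F}_\tau} + \sum_{m=1}^M(Q_m(\tau)-\theta_m)\expect{A_m^*(\tau)-\mu_m^*(\tau)|\mathcal{F}_\tau}. \]
Applying $\expect{\cdot|H(t_0)}$ (the tower property applies since $\sigma(H(t_0))\subseteq\mathcal{F}_\tau$ for $\tau\geq t_0$) and summing over the frame telescopes the left side into the $T$-slot drift minus $V\sum_\tau\expect{\phi(\tau)|H(t_0)}$, with the constant $B$ accumulating to $TB$.

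The crucial step, and the main obstacle, is the queue-weighted cross term. In the i.i.d.\ proof it vanished because $(X(\tau),Y(\tau))$ was independent of $\bv{Q}(\tau)$ and (\ref{eq:a-stat1}) gave a per-slot rate balance; under decaying memory neither holds, since the states carry correlations and only the $T$-averaged balance (\ref{eq:decaying_rate}) is available. I would resolve this by \emph{freezing} the weight, writing $Q_m(\tau)-\theta_m = (Q_m(t_0)-\theta_m)+(Q_m(\tau)-Q_m(t_0))$. Because any queue moves by at most $\max[A_{m,max},\mu_{m,max}]$ per slot and $|\expect{A_m^*-\mu_m^*|\mathcal{F}_\tau}|$ is bounded by the same constant, the correction contributes at most $(\tau-t_0)\sum_m\max[A_{m,max},\mu_{m,max}]^2 = (\tau-t_0)\,2B$, which sums over the frame to $T(T-1)B$. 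The frozen weight $Q_m(t_0)-\theta_m$ is $H(t_0)$-measurable and pulls outside the expectation, leaving $\sum_m(Q_m(t_0)-\theta_m)\sum_\tau\expect{A_m^*(\tau)-\mu_m^*(\tau)|H(t_0)}$; by (\ref{eq:decaying_rate}) and the part-(a) bound $|Q_m(t_0)-\theta_m|\leq\max[\theta_m,A_{m,max}]$, this is at most $T\epsilon\sum_m\max[\theta_m,A_{m,max}]$ in magnitude.

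Finally I would assemble the frame bound: combining the profit estimate (\ref{eq:decaying_profit}) in the form $\sum_\tau\expect{\phi^*(\tau)|H(t_0)}\geq T(\phi^{opt}-\epsilon)$ with the freezing error and the cross-term estimate yields
\[ \expect{L(\bv{Q}(t_0+T))-L(\bv{Q}(t_0))|H(t_0)} - V\sum_{\tau=t_0}^{t_0+T-1}\expect{\phi(\tau)|H(t_0)} \leq T^2 B - VT\phi^{opt} + VT\epsilon + T\epsilon\sum_{m=1}^M\max[\theta_m,A_{m,max}], \]
where $T^2B = TB + T(T-1)B$ recombines the accumulated one-slot constants and the freezing error. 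Taking full expectations, summing over the disjoint frames $t_0 = 0, T, \ldots, (R-1)T$, using $L\geq 0$ and $\expect{L(\bv{Q}(0))}<\infty$, dividing by $RTV$, and letting $R\to\infty$ telescopes away the Lyapunov terms and turns the constant $T^2B$ into the stated coefficient $TB/V$; since $\phi(\tau)=\phi_{actual}(\tau)$ throughout, this is exactly (\ref{eq:exp_Tdrift_final}). The only genuinely delicate points are justifying the history-conditioning/tower-property step and confirming that the frozen cross term produces precisely the $\epsilon\sum_m\max[\theta_m,A_{m,max}]/V$ penalty rather than an uncontrolled growth in $T$.
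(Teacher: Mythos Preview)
Your proposal is correct and follows essentially the same approach as the paper: a $T$-slot drift argument in which the queue weight in the cross term is frozen at $Q_m(t_0)$, the freezing error is bounded by $T(T-1)B$ (combining with the accumulated per-slot $TB$ to give $T^2B$), and the decaying-memory bounds (\ref{eq:decaying_profit})--(\ref{eq:decaying_rate}) are applied to the frozen term using $|Q_m(t_0)-\theta_m|\leq\max[\theta_m,A_{m,max}]$. The only organizational difference is that the paper packages the $T$-slot comparison into a separate lemma conditioned on an information set $\hat{H}_T(t_0)$ that also includes the $X,Y$ realizations over the frame, whereas you condition on the natural filtration $\mathcal{F}_\tau$ and invoke the tower property; the content and all constants are identical.
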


To understand this result, note that the coefficient multiplying the $\epsilon$ term in the right hand
side of (\ref{eq:exp_Tdrift_final}) is $O(1)$ (recall that $\theta_m$  in (\ref{eq:good-theta}) is linear in $V$). 
Thus, for a given $\epsilon>0$, the final term is $O(\epsilon)$.  Let $T = T_{\epsilon}$ represent the required
mixing time to achieve (\ref{eq:decaying_profit})-(\ref{eq:decaying_rate}) for the given $\epsilon$, and choose
$V = T_{\epsilon}/\epsilon$.  Then by (\ref{eq:exp_Tdrift_final}), we 
are within $\epsilon B + O(\epsilon) = O(\epsilon)$  of the optimal time average profit
$\phi^{opt}$, with buffer size $O(V) = O(T_{\epsilon}/\epsilon)$.  For i.i.d. processes $X(t)$, $Y(t)$, 
we have $T_{\epsilon}=1$
for all $\epsilon\geq 0$, and so the buffer size is $O(1/\epsilon)$.  For processes
$X(t)$, $Y(t)$ that are modulated by finite state ergodic Markov chains, it can be shown that 
$T_{\epsilon} = O(\log(1/\epsilon))$, and so the buffer size requirement is $O((1/\epsilon)\log(1/\epsilon))$ 
\cite{rahul-cognitive-tmc}\cite{two-prices-allerton07}. 

To prove the theorem, it is useful to define the following notions. Using the same Lyapunov function 
in (\ref{eq:lyap-function}) and a positive integer $T$, we define the $T$-slot Lyapunov drift as follows:
\begin{eqnarray}
\Delta_T(H(t))\triangleq \expect{L(\bv{Q}(t+T))-L(\bv{Q}(t))\left.|\right. H(t)},\label{eq:Tslotdrift}
\end{eqnarray}
where $H(t)$ is defined in (\ref{eq:history}) as the past history up to time $t$. It is also useful to define the following notion:
\begin{eqnarray}
\hat{\Delta}_T(t)\triangleq \expect{L(\bv{Q}(t+T))-L(\bv{Q}(t))\left.|\right. \hat{H}_T(t)},\label{eq:Tslotdrift_sp}
\end{eqnarray}
where: 
\begin{eqnarray}
\hat{H}_T(t)= \{H(t)\} \cup \{X(t), Y(t), ..., X(t+T-1), Y(t+T-1)\}\cup\{\bv{Q}(t)\}\label{eq:Tslot_realization_def}
\end{eqnarray}
The value $\hat{H}_T(t)$ represents all history $H(t)$, and additionally includes
the sequence of realizations of the supply and demand states in the interval $\{t, t+1, ..., t+T-1\}$.
It also includes the  backlog vector $\bv{Q}(t)$ (this is already included in $H(t)$, but we explicitly 
include it again in (\ref{eq:Tslot_realization_def}) for convenience).  
Given these values, the expectation in (\ref{eq:Tslotdrift_sp}) 
is with respect to the random demand outcomes $D_k(t)$ and the possibly randomized control
actions. 
We have the following lemma:
\begin{lem}\label{lemma:sample_path_Tdrift}
Suppose the JPP algorithm is implemented with the $\theta_m$ values satisfying (\ref{eq:good-theta}) and $\mu_{m, max}\leq Q_m(0)\leq\theta_m+A_{m, max}$ for all $m\in\{1, 2, ..., M\}$. Then for any $t_0\in\{0, 1, 2, ...\}$, any integers $T$, any $\hat{H}_T(t_0)$, and any $\bv{Q}(t_0)$ value, we have:
\begin{eqnarray}
\hspace{-.2in}&&\hat{\Delta}_T(t_0)-V \sum_{\tau=t_0}^{t_0+T-1}\expect{\phi(\tau)\left.|\right. \hat{H}_T(t_0)}\leq T^2B
-V \sum_{\tau=t_0}^{t_0+T-1}\expect{\phi^*(\tau)\left.|\right. \hat{H}_T(t_0)}\label{eq:Tdrift_samplepath_lemma}\\
\hspace{-.2in}&&  +\sum_{m=1}^M\big(Q_m(t_0)-\theta_m\big)\sum_{\tau=t_0}^{t_0+T-1}\expect{A^*_m(\tau)-\sum_{k=1}^K\beta_{mk}Z^*_k(\tau)F_k(P^*_k(\tau), Y(\tau))\left.|\right. \hat{H}_T(t_0)},\nonumber
\end{eqnarray}
with $B$ defined in (\ref{eq:B}) 
and $\phi^*(\tau), A^*_m(\tau)$, $Z_k^*(\tau)$ and $P^*_k(\tau)$ are variables generated by any other policy
that can be implemented over the $T$ slot interval (including those that know the future $X(\tau)$, $Y(\tau)$
states in this
interval).  
\end{lem}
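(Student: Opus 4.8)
The plan is to reduce the $T$-slot statement to $T$ applications of the one-slot drift-plus-penalty comparison already established in Lemma \ref{lem:drift-comp}, and then to re-express every per-slot ``queue weight'' $Q_m(\tau)-\theta_m$ in terms of the anchored weight $Q_m(t_0)-\theta_m$. First I would telescope the $T$-slot drift,
\[ \hat{\Delta}_T(t_0) = \sum_{\tau=t_0}^{t_0+T-1}\expect{L(\bv{Q}(\tau+1))-L(\bv{Q}(\tau))\left|\right.\hat{H}_T(t_0)}, \]
so that it suffices to bound each single-slot increment and sum.

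For each fixed slot $\tau$ in the window I would work first conditionally on $\bv{Q}(\tau)$. Since part (a) of Theorem \ref{thm:performance1} is a pure sample-path bound, it holds here and guarantees $Q_m(\tau)\geq\mu_{m,max}$, hence $1_k(\tau)=0$ for every $k$; this makes the edge constraint slack and certifies that the benchmark decisions $A_m^*(\tau),Z_k^*(\tau),P_k^*(\tau)$ are a feasible choice at slot $\tau$. Because JPP is designed to minimize the right-hand side of (\ref{eq:drift-q})/(\ref{eq:big-drift2}) over all feasible decisions at the observed $\bv{Q}(\tau)$, and the benchmark decisions are one such feasible choice, the one-slot comparison gives
\begin{align*}
&\expect{L(\bv{Q}(\tau+1))-L(\bv{Q}(\tau))\left|\right.\bv{Q}(\tau)} - V\expect{\phi(\tau)\left|\right.\bv{Q}(\tau)} \leq B - V\expect{\phi^*(\tau)\left|\right.\bv{Q}(\tau)} \\
&\qquad + \sum_{m=1}^M(Q_m(\tau)-\theta_m)\expect{A_m^*(\tau)-\sum_{k=1}^K\beta_{mk}Z_k^*(\tau)F_k(P_k^*(\tau),Y(\tau))\left|\right.\bv{Q}(\tau)},
\end{align*}
where the demand $D_k$ has been replaced by $F_k$ through the law of iterated expectations exactly as in (\ref{eq:foo}). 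I would then apply the tower property to pass from conditioning on $\bv{Q}(\tau)$ to conditioning on $\hat{H}_T(t_0)$ (which fixes all supply/demand states in the window but leaves $\bv{Q}(\tau)$ random), and sum over $\tau$. Feasibility is the only property of the benchmark that is used, so this step tolerates an arbitrary, even clairvoyant, competitor.

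The decisive step, and the one I expect to be the main obstacle, is anchoring the weights: the comparison above carries the running weight $Q_m(\tau)-\theta_m$, whereas the lemma requires $Q_m(t_0)-\theta_m$. Using the dynamics (\ref{eq:dynamics1}) I would write $Q_m(\tau)-\theta_m = (Q_m(t_0)-\theta_m) + \sum_{s=t_0}^{\tau-1}(A_m(s)-\mu_m(s))$. The leading part produces precisely the anchored term in the statement, since $Q_m(t_0)$ is measurable with respect to $\hat{H}_T(t_0)$ and pulls out of the expectation. The remaining part is a correlation term that I would bound pointwise: each one-slot increment satisfies $|A_m(s)-\mu_m(s)|\leq\max[A_{m,max},\mu_{m,max}]$ and likewise $|A_m^*(\tau)-\sum_k\beta_{mk}Z_k^*(\tau)F_k|\leq\max[A_{m,max},\mu_{m,max}]$, so the slot-$\tau$ correlation is at most $(\tau-t_0)\sum_m\max[A_{m,max},\mu_{m,max}]^2 = 2B(\tau-t_0)$.

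Finally I would collect the error budget. Summing the per-slot constant $B$ over the window gives $TB$, and summing the anchoring correlation over $\tau=t_0,\ldots,t_0+T-1$ gives $2B\sum_{j=0}^{T-1}j = T(T-1)B$; together these yield exactly $TB + T(T-1)B = T^2B$, which is the constant appearing on the right-hand side of (\ref{eq:Tdrift_samplepath_lemma}). The only points needing care are that the iterated-expectation replacement of $\mu_m^*$ by its $F_k$-form remains valid under the coarser conditioning on $\hat{H}_T(t_0)$ (it does, because the fresh demand at slot $\tau$ is conditionally independent of the past given the price and state), and that the pointwise magnitude bounds are applied \emph{before} taking the conditional expectation, so that no independence between $\bv{Q}(\tau)$ and the benchmark's slot-$\tau$ quantities is required.
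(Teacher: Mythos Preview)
Your proposal is correct and follows essentially the same route as the paper: telescope the $T$-slot drift into one-slot increments, apply the JPP minimization property at each slot (using $Q_m(\tau)\geq\mu_{m,max}$ to ensure the benchmark is feasible), replace $\mu_m^*$ by its $F_k$-form via iterated expectations, sum to get $TB$, then anchor the weights at $Q_m(t_0)$ using the per-slot increment bound $|Q_m(\tau)-Q_m(t_0)|\leq(\tau-t_0)\max[A_{m,max},\mu_{m,max}]$ to pick up the extra $T(T-1)B$, giving $T^2B$. The only cosmetic difference is that the paper conditions on $\hat{H}_T(t_0)$ throughout and invokes iterated expectations inside that conditioning, whereas you first condition on $\bv{Q}(\tau)$ and then tower up; both are valid and lead to the same bound.
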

\begin{proof}
See Appendix C. 
\end{proof}

We now prove Theorem  \ref{theorem:JPP_ergodic}. 
\begin{proof} (Theorem \ref{theorem:JPP_ergodic})
Fix any $t_0\geq0$. Taking expectations 
on both sides of (\ref{eq:Tdrift_samplepath_lemma}) (conditioning on the information $H(t_0)$ that is already included in $\hat{H}_T(t_0)$)  yields:
\begin{eqnarray*}
\hspace{-.2in}&&\Delta_T(H(t_0)) - V \sum_{\tau=t_0}^{t_0+T-1}\expect{\phi(\tau)\left.|\right. H(t_0)}\leq T^2B- V\sum_{\tau=t_0}^{t_0+T-1}\expect{\phi^*(\tau)\left.|\right. H(t_0)} \\
\hspace{-.2in}&&+\sum_{m=1}^M(Q_m(t_0)-\theta_m)\sum_{\tau=t_0}^{t_0+T-1}\expect{A^*_m(\tau)-\sum_{k=1}^K\beta_{mk}Z^*_k(\tau)F_k(P^*_k(\tau), Y(\tau))\left.|\right. H(t_0)}. 
\end{eqnarray*}
Now plugging in the policy in Corollary \ref{cor:1} and using the $\epsilon$ and $T$ that
 yield  (\ref{eq:decaying_profit}) and (\ref{eq:decaying_rate}) in the above, 
and using the fact that $|Q_m(t_0)-\theta_m|\leq \max[\theta_m, A_{m,max}]$, we have:
\begin{eqnarray}
\Delta_T(H(t_0)) - V \sum_{\tau=t_0}^{t_0+T-1}\expect{\phi(\tau)\left.|\right. H(t_0)}\leq \nonumber \\
T^2B- VT\phi^{opt} + VT\epsilon 
+T\sum_{m=1}^M \max[\theta_m, A_{m,max}]\epsilon.\label{eq:exp_Tdrift_0}
\end{eqnarray}
We can now take expectations of (\ref{eq:exp_Tdrift_0}) over $H(t_0)$ to obtain:
\begin{eqnarray}
\expect{L(\bv{Q}(t_0+T)) - L(\bv{Q}(t_0))} - V \sum_{\tau=t_0}^{t_0+T-1}\expect{\phi(\tau) }\leq \nonumber \\
T^2B- VT\phi^{opt} + VT\epsilon 
+T\sum_{m=1}^M \max[\theta_m, \mu_{m,max}]\epsilon.   \nonumber
\end{eqnarray}
Summing the above over $t_0= 0, T, 2T, ..., (J-1)T$ for some positive $J$ and dividing both sides by $VTJ$, we get:
\begin{eqnarray}
\frac{\expect{L(\bv{Q}(JT)) - L(\bv{Q}(0))}}{VTJ} - \frac{1}{JT} \sum_{\tau=0}^{JT-1}\expect{\phi(\tau) }\leq  \nonumber \\
\frac{TB}{V}- \phi^{opt} + \epsilon 
+\sum_{m=1}^M \frac{\max[\theta_m, A_{m,max}]\epsilon}{V}.\label{eq:exp_Tdrift_2}
\end{eqnarray}
By rearranging terms and using the fact that $L(t)\geq0$ for all $t$, we obtain:
\begin{eqnarray*}
\frac{1}{JT} \sum_{\tau=0}^{JT-1}\expect{\phi(\tau) }\geq \phi^{opt}  - \frac{TB}{V}- \epsilon \left(1+\sum_{m=1}^M \frac{\max[\theta_m,A_{m,max}]}{V}\right) -  \frac{\expect{ L(\bv{Q}(0))}}{VTJ}.
\end{eqnarray*}
Taking the liminf as $J\rightarrow\infty$, we have:
\begin{eqnarray}
\liminf_{t\rightarrow\infty}\frac{1}{t} \sum_{\tau=0}^{t-1}\expect{\phi(\tau) }\geq \phi^{opt}  - \frac{TB}{V}- \epsilon \left(1+\sum_{m=1}^M \frac{\max[\theta_m,A_{m,max}]}{V}\right).
\end{eqnarray}
This completes the proof of Theorem \ref{theorem:JPP_ergodic}.
\end{proof}

\section{Arbitrary Supply and Demand Processes} \label{section:non-ergodic} 
In this section, we further relax our assumption about the supply and demand processes to allow for  \emph{arbitrary} $X(t)$ and $Y(t)$ processes, and we look at the performance of the JPP algorithm. Note that in this case, the notion of ``optimal'' time average profit may no longer be applicable. Thus, we instead compare the performance of JPP with the optimal value that one can achieve over a time interval of length $T$. This optimal value is defined to be the supremum of the achievable average profit over any policy, including those which  know the entire realizations of $X(t)$ and $Y(t)$  over the $T$ slots at the very beginning of the interval. We will call such a policy a \emph{$T$-slot Lookahead policy} in the following. We will show that in this case, JPP's performance is close to that under an optimal $T$-slot Lookahead policy.  This $T$-slot lookahead
metric is similar to the one used in 
\cite{neely-stock-arxiv}\cite{neely-universal-scheduling}, with the exception that here we
compare to policies that
know only the $X(t)$ and $Y(t)$ realizations and not the demand $D_k(t)$ realizations. 

\subsection{The $T$-slot Lookahead Performance}
Let $T$ be a positive integer and let $t_0\geq0$. 
Define $\phi_T(t_0)$ as the optimal expected
profit achievable over the interval $\{t_0, t_0+1, ..., t_0+T-1\}$ by any policy that has the complete knowledge 
of the entire $X(t)$ and $Y(t)$ processes over this interval and that ensures that the quantity of the raw materials purchased over the interval is equal to 
 the expected amount consumed.  Note here that although the future $X(t)$, $Y(t)$
values are assumed to be known, the random demands $D_k(t)$ are still unknown. 
Mathematically, $\phi_T(t_0)$ can be defined as the solution to the following optimization problem:
\begin{eqnarray}
\hspace{-.3in}(\textbf{P1}) &&\nonumber\\
\hspace{-.3in}  \max: &&\phi_T(t_0)=\sum_{\tau=t_0}^{t_0+T-1} \expect{\phi(\tau)\left.|\right. \hat{H}_T(t_0)}\label{eq:universal_obj}\\
\hspace{-.3in}s.t. && \sum_{\tau=t_0}^{t_0+T-1}  \expect{A_m(\tau)-\sum_{k=1}^K\beta_{mk}Z_k(\tau)F_k(P_k(\tau), Y(\tau))\left.|\right. \hat{H}_T(t_0)} = 0, \,\,\,\forall m \label{eq:universal_cond}\\
\hspace{-.3in}&& \text{Constraints}\,\, (\ref{eq:p-k-constraint}), (\ref{eq:Amax}), (\ref{eq:integer-a}), (\ref{eq:cmax} ).\label{eq:universal_cond2}
\end{eqnarray}
Here $\hat{H}_T(t_0)$ is defined in (\ref{eq:Tslot_realization_def}) and includes 
the sequence of realizations of $X(t)$ and $Y(t)$ during the interval $\{t_0, ..., t_0+T-1\}$; 
$\phi(\tau)$ is defined in Equation (10) as the instantaneous profit obtained at time $\tau$; $A_m(\tau)$ and $\sum_{k=1}^K\beta_{mk}Z_k(\tau)F_k(P_k(\tau), Y(\tau))$ are the number of newly ordered parts and the expected
number of consumed parts in time $\tau$, respectively; and the expectation is taken over the randomness in $D_k(\tau)$, due to the fact that the demand at time $\tau$ is a random variable. We note that in Constraint (\ref{eq:universal_cond}), we actually do not require that in every intermediate step the raw material queues have enough for production. This means that a $T$-slot Lookahead policy can make products out of its future raw materials, provided that they are purchased before the interval ends. 
Since purchasing no materials and selling no products over the entire interval is a valid policy, we see that the value $\phi_T(t_0)\geq0$ for all $t_0$ and all $T$. 

In the following, we will look at the performance of JPP over the interval from $0$ to $JT-1$, which is divided into a total of $J$ frames with length $T$ each. We show that for any $J>0$, the JPP algorithm yields an average profit over $\{0, 1, ..., JT-1\}$ that is close to the profit obtained with an optimal $T$-slot Lookahead policy implemented on each $T$-slot frame.

\subsection{Performance of JPP under arbitrary supply and demand}
The following theorem summarizes the results. 
\begin{thm}\label{theorem:JPP_arbitrary}
Suppose the Joint Purchasing and Pricing Algorithm (JPP) is implemented, with $\theta_m$ satisfying condition (\ref{eq:good-theta}) and that $\mu_{m, max}\leq Q_m(0)\leq\theta_m+A_{m, max}$ for all $m\in\{1, 2, ..., M\}$. Then for any arbitrary $X(t)$ and $Y(t)$ processes, the queue backlog values satisfies part (a) of Theorem \ref{thm:performance1}. Moreover, for any positive integers $J$ and $T$, and any $\hat{H}_{JT}(0)$ (which specifies
the initial queue vector $\bv{Q}(0)$ according to the above bounds, and specifies
all $X(\tau)$, $Y(\tau)$ values for $\tau \in\{0, 1, \ldots, JT-1\}$), 
the time average profit over the interval $\{0, 1, ..., JT-1\}$ satisfies:
\begin{eqnarray*}
\frac{1}{JT} \sum_{\tau=0}^{JT-1}\expect{\phi(\tau)\left.|\right. \hat{H}_{JT}(0)}\geq\frac{1}{JT}\sum_{j=0}^{J-1}\phi_T(jT) - \frac{BT}{V} -\frac{L(\bv{Q}(0))}{ VJT}. 
\end{eqnarray*}
where $\phi_T(jT)$ is defined to be the optimal value of the problem (\textbf{P1}) over the interval $\{jT, ..., (j+1)T-1\}$. The constant $B$ is defined in (\ref{eq:B}). 
\end{thm}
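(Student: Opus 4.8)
The plan is to reduce everything to the single-frame sample-path drift bound of Lemma~\ref{lemma:sample_path_Tdrift} and to choose, as the ``other policy'' $\phi^*,A_m^*,Z_k^*,P_k^*$ appearing there, precisely an optimal (or near-optimal) $T$-slot Lookahead policy solving (\textbf{P1}) on each frame $\{jT,\ldots,(j+1)T-1\}$. First I would note that part~(a) of Theorem~\ref{thm:performance1} carries over verbatim, since Lemmas~\ref{lem:finite-buffer} and \ref{lem:lower-bound} are sample-path statements that never invoke the statistics of $X(t)$ or $Y(t)$; in particular $Q_m(t)\geq\mu_{m,max}$ for all $t$, so the edge indicator defined in (\ref{eq:edge}) satisfies $1_k(t)=0$, every demand is met, and $\phi_{actual}(\tau)=\phi(\tau)$ throughout. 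Hence it suffices to prove the stated bound for $\phi(\tau)$.

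The crucial observation is what happens when the comparison policy in Lemma~\ref{lemma:sample_path_Tdrift} is taken to be an optimizer of (\textbf{P1}) over the frame. The lemma explicitly admits comparison policies that know the future $X(\tau),Y(\tau)$ realizations over the frame, which is exactly the information a $T$-slot Lookahead policy uses, so such a policy is admissible. Its rate-balance constraint (\ref{eq:universal_cond}) forces $\sum_{\tau=jT}^{(j+1)T-1}\expect{A_m^*(\tau)-\sum_{k}\beta_{mk}Z_k^*(\tau)F_k(P_k^*(\tau),Y(\tau))\,|\,\hat{H}_T(jT)}=0$ for every $m$, so the entire inventory-weighted term on the right-hand side of (\ref{eq:Tdrift_samplepath_lemma}) vanishes, while its objective equals $\sum_{\tau}\expect{\phi^*(\tau)\,|\,\hat{H}_T(jT)}=\phi_T(jT)$. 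The lemma therefore collapses to the clean per-frame inequality
\begin{equation*}
\hat{\Delta}_T(jT)-V\sum_{\tau=jT}^{(j+1)T-1}\expect{\phi(\tau)\,|\,\hat{H}_T(jT)}\leq T^2B-V\phi_T(jT).
\end{equation*}
If the supremum defining $\phi_T(jT)$ is not attained, I would instead feed in a $\delta$-optimal feasible policy, obtain $T^2B-V(\phi_T(jT)-\delta)$ on the right, and let $\delta\downarrow 0$ at the end.

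Next I would pass from the conditioning $\hat{H}_T(jT)$ to the conditioning $\hat{H}_{JT}(0)$ required by the theorem. Since $\hat{H}_{JT}(0)$ fixes every $X(\tau),Y(\tau)$ on $\{0,\ldots,JT-1\}$ and the queue process is, given those realizations, driven only by the random demands, taking $\expect{\cdot\,|\,\hat{H}_{JT}(0)}$ of the per-frame inequality and using iterated expectations over the random intermediate state $\bv{Q}(jT)$ replaces each conditional by $\expect{\cdot\,|\,\hat{H}_{JT}(0)}$; moreover $\phi_T(jT)$ is a deterministic function of the frame's $X,Y$ realizations, because (\textbf{P1}) never references the realized backlog, so $\expect{\phi_T(jT)\,|\,\hat{H}_{JT}(0)}=\phi_T(jT)$. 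Summing over $j=0,\ldots,J-1$ telescopes the Lyapunov terms into $\expect{L(\bv{Q}(JT))\,|\,\hat{H}_{JT}(0)}-L(\bv{Q}(0))$; dropping the nonnegative term $\expect{L(\bv{Q}(JT))\,|\,\hat{H}_{JT}(0)}$ and dividing by $VJT$ turns the accumulated penalty $JT^2B$ into exactly $TB/V$, yielding the claimed bound.

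I expect the main obstacle to be the bookkeeping of the conditioning in the previous paragraph: making precise that Lemma~\ref{lemma:sample_path_Tdrift}, stated conditioned on the richer information $\hat{H}_T(jT)$ (which includes $\bv{Q}(jT)$), can be averaged down to the coarser $\hat{H}_{JT}(0)$ without disturbing the telescoping structure, and verifying that $\phi_T(jT)$ is measurable with respect to $\hat{H}_{JT}(0)$ so that it survives the conditional expectation unchanged. The two conceptual points that make the proof work---that the lookahead policy's frame-level rate balance annihilates the inventory-weighted drift term, and that its optimal objective is exactly the benchmark $\phi_T(jT)$---are immediate once the comparison policy is identified, so once the conditioning is handled carefully the remainder is routine telescoping and rearrangement.
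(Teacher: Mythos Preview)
Your proposal is correct and follows essentially the same route as the paper: invoke Lemma~\ref{lemma:sample_path_Tdrift} on each frame with the comparison policy taken to be an optimizer of (\textbf{P1}), so that the rate-balance constraint (\ref{eq:universal_cond}) kills the inventory-weighted term and the objective collapses to $\phi_T(jT)$; then switch the conditioning from $\hat{H}_T(jT)$ to $\hat{H}_{JT}(0)$ by iterated expectations over $\bv{Q}(jT)$, sum, telescope, and rearrange. The paper carries out exactly these steps, and the conditioning bookkeeping you flag as the main obstacle is handled there just as you describe.
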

\begin{proof} (Theorem \ref{theorem:JPP_arbitrary}) 
Fix any $t_0\geq0$. We denote the optimal solution to the problem (\textbf{P1}) over the interval $\{t_0, t_0+1, ..., t_0+T-1\}$ as:
\[\{\phi^*(\tau), A^*_m(\tau), Z^*_k(\tau), P^*_k(\tau)\}_{\tau=t_0, ..., t_0+T-1}^{m=1, ..., M}.\]
Now recall (\ref{eq:Tdrift_samplepath_lemma}) as follows:
\begin{eqnarray}
\hspace{-.2in}&&\hat{\Delta}_T(t_0)-V \sum_{\tau=t_0}^{t_0+T-1}\expect{\phi(\tau)\left.|\right. \hat{H}_T(t_0)}\leq T^2B-V \sum_{\tau=t_0}^{t_0+T-1}\expect{\phi^*(\tau)\left.|\right. \hat{H}_T(t_0)}\label{eq:Tdrift_samplepath_lemma_reuse}\\
\hspace{-.2in}&&  +\sum_{m=1}^M\big(Q_m(t_0)-\theta_m\big)\sum_{\tau=t_0}^{t_0+T-1}\expect{A^*_m(\tau)-\sum_{k=1}^K\beta_{mk}Z^*_k(\tau)F_k(P^*_k(\tau), Y(\tau))\left.|\right. \hat{H}_T(t_0)},\nonumber
\end{eqnarray}
 Now note that the actions $\phi^*(\tau)$, $A^*_m(\tau)$ $Z_k^*(\tau)$ and $P^*_k(\tau)$ satisfy 
 (\ref{eq:universal_obj})-(\ref{eq:universal_cond}) and so: 
\begin{eqnarray}
\hat{\Delta}_T(t_0)-V \sum_{\tau=t_0}^{t_0+T-1}\expect{\phi(\tau)\left.|\right. \hat{H}_T(t_0)}\leq T^2B-V \phi_T(t_0).  \label{eq:almost-done} 
\end{eqnarray}
Now note that since JPP makes actions based only on the current queue backlog and $X(\tau)$, $Y(\tau)$ states, 
we have: 
\begin{eqnarray*}
 \hat{\Delta}_T(t_0) &=& \expect{L(\bv{Q}(t_0+T))-L(\bv{Q}(t_0))|\hat{H}_{JT}(0), \bv{Q}(t_0)} \\
 \sum_{\tau=t_0}^{t_0+T-1} \expect{\phi(\tau)|\hat{H}_T(t_0)} &=& \sum_{\tau=t_0}^{t_0+T-1}\expect{\phi(\tau)|\hat{H}_{JT}(0), \bv{Q}(t_0)} 
 \end{eqnarray*}
 That is, conditioning on the additional $X(\tau)$, $Y(\tau)$ states for $\tau$ outside of the $T$-slot interval
 does not change the expectations.  Using these in (\ref{eq:almost-done}) yields: 
 \begin{eqnarray*}
\expect{L(\bv{Q}(t_0+T))-L(\bv{Q}(t_0))|\hat{H}_{JT}(0), \bv{Q}(t_0)} 
-V\sum_{\tau=t_0}^{t_0+T-1}\expect{\phi(\tau)|\hat{H}_{JT}(0), \bv{Q}(t_0)} 
\leq \\
T^2B-V \phi_T(t_0).  
\end{eqnarray*}
Taking expectations of the above with respect to the random $\bv{Q}(t_0)$ states (given $\hat{H}_{JT}(0)$)
yields: 
 \begin{eqnarray*}
\expect{L(\bv{Q}(t_0+T))-L(\bv{Q}(t_0))|\hat{H}_{JT}(0)} 
-V\sum_{\tau=t_0}^{t_0+T-1}\expect{\phi(\tau)|\hat{H}_{JT}(0)} 
\leq \\
T^2B-V \phi_T(t_0).  
\end{eqnarray*}
 
Letting $t_0=jT$ and summing over $j=0, 1, ... J-1$ yields:
\begin{eqnarray*}
\expect{L(\bv{Q}(JT))-L(\bv{Q}(0))\left.|\right. \hat{H}_{JT}(0)} - V \sum_{\tau=0}^{JT-1}\expect{\phi(\tau)\left.|\right. \hat{H}_{JT}(0)}\\
\leq JT^2B-V \sum_{j=0}^{J-1}\phi_T(jT). 
\end{eqnarray*}
Rearranging the terms, dividing both sides by $VJT$ and using the fact that $L(t)\geq0$ for all $t$, we get:
\begin{eqnarray*}
\frac{1}{JT} \sum_{\tau=0}^{JT-1}\expect{\phi(\tau)\left.|\right. \hat{H}_{JT}(0)}\geq\frac{1}{JT}\sum_{j=0}^{J-1}\phi_T(jT) - \frac{BT}{V} -\frac{\expect{L(\bv{Q}(0))\left.|\right. \hat{H}_{JT}(0)}}{ VJT}. 
\end{eqnarray*}
Because $\bv{Q}(0)$ is included in the $\hat{H}_{JT}(0)$ information, we have $\expect{L(\bv{Q}(0))|\hat{H}_{JT}(0)} = L(\bv{Q}(0))$. 
This proves Theorem \ref{theorem:JPP_arbitrary}. 
\end{proof}

\section{Conclusions} 

We have developed a dynamic pricing and purchasing strategy that achieves time
average profit that is arbitrarily close to optimal, with a corresponding tradeoff in the maximum
buffer size required for the raw material queues.   When the supply and demand states $X(t)$ and $Y(t)$
are i.i.d. over slots, we showed that the profit is within $O(1/V)$ of optimality, with a worst-case
buffer requirement of $O(V)$, where $V$ is a parameter that can be chosen as desired to affect the 
tradeoff.  Similar performance was shown for ergodic $X(t)$ and $Y(t)$ processes with a mild
decaying memory property, where the deviation from optimality also depends on a ``mixing time''
parameter.   Finally, we showed that the same algorithm provides efficient performance for \emph{arbitrary} 
(possibly non-ergodic) $X(t)$ and $Y(t)$ processes.  In this case, efficiency is measured
against an ideal $T$-slot lookahead policy with knowledge of the future $X(t)$ and $Y(t)$ values up
to $T$ slots.

Our Joint Purchasing and Pricing (JPP) algorithm reacts to the observed system
state on every slot, and does not require knowledge of the probabilities associated with future
states.   Our analysis technique is based on Lyapunov optimization, and uses
a Lyapunov function that ensures enough inventory is available to take advantage of emerging
favorable demand states.  This analysis approach can be applied to very large systems, without the 
curse of dimensionality issues seen by other approaches such as dynamic programming.

\section*{Appendix A --- Proof of  Necessity for Theorem \ref{thm:max-profit}}

\begin{proof} (Necessity portion of Theorem \ref{thm:max-profit}) 
For simplicity, we assume the system is initially empty. 
Because $X(t)$ and $Y(t)$ are stationary, we have
$Pr[X(t) = x]  = \pi(x)$ and $Pr[Y(t) = y]  = \pi(y)$ for all $t$ and all $x \in \script{X}$, 
$y\in\script{Y}$. 
Consider any algorithm that makes decisions for $\bv{A}(t), \bv{Z}(t), \bv{P}(t)$ over
time, and also makes decisions for $\tilde{\bv{D}}(t)$ 
according to the scheduling 
constraints (\ref{eq:scheduling-constraints1})-(\ref{eq:scheduling-constraints2}). 
Let $\phi_{actual}(t)$ represent the  actual instantaneous profit associated with this
algorithm. 
 
 Define $\overline{\phi}_{actual}$ as the $\limsup$ time average expectation of $\phi_{actual}(t)$, and 
 let $\{\tilde{t}_i\}$ represent the subsequence of times over which the $\limsup$ is achieved, so that: 
 \begin{equation} \label{eq:liminf-appa} 
  \lim_{i \rightarrow \infty} \frac{1}{\tilde{t}_i} \sum_{\tau=0}^{\tilde{t}_i-1} \expect{\phi_{actual}(\tau)} = \overline{\phi}_{actual} 
  \end{equation} 
 Let $\overline{c}(t), \overline{r}(t)$, $\overline{a}_m(t), \overline{\mu}_m(t)$ represent the following
  time averages up to slot $t$: 
 \begin{eqnarray}
 \overline{c}(t) &\defequiv& \frac{1}{t}\sum_{\tau=0}^{t-1} \expect{c(\bv{A}(\tau), X(\tau))} \label{eq:appa1} \\
 \overline{r}(t) &\defequiv& \frac{1}{t}\sum_{\tau=0}^{t-1} \sum_{k=1}^K\expect{Z_k(\tau)\tilde{D}_k(\tau)(P_k(\tau)-\alpha_k)} \label{eq:appa2} \\
 \overline{a}_m(t) &\defequiv& \frac{1}{t}\sum_{\tau=0}^{t-1} \expect{A_m(\tau)} \label{eq:appa3} \\
 \overline{\mu}_m(t) &\defequiv& \frac{1}{t}\sum_{\tau=0}^{t-1} \sum_{k=1}^K \beta_{mk}\expect{Z_k(\tau)\tilde{D}_k(\tau)}  \label{eq:appa4} 
 \end{eqnarray}
 Because the system is initially empty, we cannot use more raw materials of type $m$ 
 up to time $t$ than
 we have purchased,  and hence: 
 \begin{equation} \label{eq:supply-constraint-appa}
  \overline{a}_m(t) \geq \overline{\mu}_m(t) \: \: \mbox{ for all $t$ and all $m \in \{1, \ldots, M\}$} 
  \end{equation} 
   Further, note that the sum profit up to time $t$ is given by: 
  \begin{equation} \label{eq:sum-profit-appa} 
  \frac{1}{t}\sum_{\tau=0}^{t-1} \expect{\phi_{actual}(\tau)} = -\overline{c}(t) + \overline{r}(t)
  \end{equation} 
 We now have the following claim, proven at the end of this section. 
 
 \emph{Claim 1:}  For each slot $t$ and each $x\in\script{X}$, $y\in\script{Y}$, 
 $\bv{a} \in \script{A}(x)$, $k \in \{1, \ldots, K\}$, there are
 functions $\theta(\bv{a}, x, t)$, $\nu_k(y,t)$, and $d_k(y,t)$ 
 such that: 
 \begin{eqnarray}
 \overline{c}(t) &=& \sum_{x \in \script{X}} \pi(x)\sum_{\bv{a} \in \script{A}(x)} \theta(\bv{a}, x, t)c(\bv{a}, x) \label{eq:c1-appa} \\
  \overline{a}_m(t) &=& \sum_{x \in \script{X}} \pi(x) \sum_{\bv{a}\in\script{A}(x)} \theta(\bv{a}, x, t)a_m \label{eq:c2-appa} \\
  \overline{r}(t) &=& \sum_{y \in \script{Y}}\pi(y)\sum_{k=1}^K  \nu_k(y,t) \label{eq:c3-appa}\\
  \overline{\mu}_m(t) &=& \sum_{y\in\script{Y}} \pi(y)\sum_{k=1}^K \beta_{mk} d_k(y,t) \label{eq:c4-appa}
 \end{eqnarray}
 and such that: 
 \begin{eqnarray}
 0 \leq \theta(\bv{a}, x, t) \leq 1 \: \: , \: \: \sum_{\bv{a} \in \script{A}(x)} \theta(\bv{a}, x, t) = 1 \: \: \forall x\in\script{X}, \bv{a} \in \script{A}(x) \label{eq:compact1} 
 \end{eqnarray}
 and such that  for each $y \in \script{Y}$, $k \in \{1, \ldots, K\}$,  
 the vector $(\nu_k(y,t); d_k(y,t))$ is in the convex hull of the following
  two-dimensional compact set: 
 \begin{equation} \label{eq:compact2} 
  \Omega_k(y) \defequiv \{ (\nu, \mu) \left|\right. \nu = (p -\alpha_k)zF_k(p, y) \: , \:  \mu = zF_k(p, y) \: , \:  p \in \script{P} \: , \: z \in \{0, 1\}  \}.  
  \end{equation} 
 
 The values $[\theta(\bv{a}, x, t); (\nu_k(y,t); d_k(y,t))]_{x \in \script{X}, y\in\script{Y}, \bv{a} \in\script{A}(x)}$ of 
 Claim 1 can be 
 viewed  as a finite or countably infinite  
 dimensional vector sequence indexed by time 
 $t$ that is contained in the compact set  defined by (\ref{eq:compact1}) and the convex
 hull of (\ref{eq:compact2}).  Hence, by a classical diagonalization procedure, 
 every infinite sequence contains a
 convergent subsequence that is contained in the set \cite{billingsley}.  
 
 Consider the infinite sequence of times $\tilde{t}_i$ (for which (\ref{eq:liminf-appa}) holds), 
 and let $t_i$ represent the infinite subsequence for which $\theta(\bv{a}, x, t_i)$, 
 $\nu_k(y, t_i)$, and $d_k(y, t_i)$ converge.  Let $\theta(\bv{a}, x)$, $\nu_k(y)$, 
 and $d_k(y)$ represent the limiting values.  Define $\hat{c}$, $\hat{a}_m$,
 $\hat{r}$, and $\hat{\mu}_m$ as the corresponding limiting values of (\ref{eq:c1-appa})-(\ref{eq:c4-appa}).   
 \begin{eqnarray*}
 \hat{c} &=& \sum_{x \in \script{X}} \pi(x)\sum_{\bv{a} \in \script{A}(x)} \theta(\bv{a}, x)c(\bv{a}, x) \\
  \hat{a}_m &=& \sum_{x \in \script{X}} \pi(x) \sum_{\bv{a}\in\script{A}(x)} \theta(\bv{a}, x)a_m \\
  \hat{r} &=& \sum_{y \in \script{Y}}\pi(y)\sum_{k=1}^K \nu_k(y) \\
  \hat{\mu}_m &=& \sum_{y\in\script{Y}} \pi(y)\sum_{k=1}^K \beta_{mk} d_k(y)
 \end{eqnarray*}
 Further, the limiting values of $\theta(\bv{a}, x)$ retain the properties (\ref{eq:compact1}) and hence
 can be viewed as probabilities.  Furthermore, taking limits as $t_i\rightarrow\infty$ in (\ref{eq:supply-constraint-appa}) and (\ref{eq:sum-profit-appa}) yields: 
 \begin{eqnarray*}
& \hat{a}_m \geq \hat{\mu}_m \: \: \mbox{ for all $m \in \{1, \ldots, M\}$}&  \\
 &\overline{\phi}_{actual} = -\hat{c} + \hat{r}& 
 \end{eqnarray*}
 Finally, note that for each $k \in \{1, \ldots, K\}$ and each $y \in \script{Y}$, the 
 vector $(\nu_k(y), d_k(y))$ is in the convex hull of the set (\ref{eq:compact2}), and 
 hence can be achieved by an $(X,Y)$-only policy that chooses $\bv{Z}(t)$ and 
 $\bv{P}(t)$ as a random
 function of the observed value of $Y(t)$, such that $Z_k(t) \in \{0, 1\}$ and $P_k(t) \in\script{P}_k$ for 
 all $k$, and: 
 \begin{eqnarray*}
 \nu_k(y) &=& \expect{(P_k(t)-\alpha_k)Z_k(t)F_k(P_k(t), y)\left|\right. Y(t) = y} \\
 d_k(y) &=& \expect{Z_k(t)F_k(P_k(t), y) \left|\right. Y(t) = y }
 \end{eqnarray*}
 It follows that $\overline{\phi}_{actual}$ is an achievable value of $\phi$  for which there are 
 appropriate auxiliary variables  that satisfy the constraints of the 
 optimization problem of Theorem \ref{thm:max-profit}.  However, $\phi^{opt}$ is defined
 as the supremum over all such $\phi$ values, and hence we must 
 have $\overline{\phi}_{actual} \leq \phi^{opt}$.
 \end{proof}
 
 It remains only to prove Claim 1. 

 \begin{proof} (Claim 1)  We  can re-write the expression for  $\overline{c}(t)$ in (\ref{eq:appa1}) 
 as follows: 
\begin{eqnarray*}
\overline{c}(t) &=& \frac{1}{t}\sum_{\tau=0}^{t-1}\sum_{x \in \script{X}} \sum_{\bv{a}}c(\bv{a}, x)\pi(x)Pr[\bv{A}(\tau)=\bv{a}\left|\right.X(\tau)=x]  \\
&=& \sum_{x \in \script{X}}\pi(x)\sum_{\bv{a}} \theta(\bv{a}, x, t)c(\bv{a}, x)
\end{eqnarray*}
where $\theta(\bv{a}, x, t)$ is defined: 
\[ \theta(\bv{a}, x, t) \defequiv \frac{1}{t}\sum_{\tau=0}^{t-1} Pr[\bv{A}(\tau) = \bv{a}\left|\right. X(\tau) = x] \]
and satisfies: 
\[ 0 \leq \theta(\bv{a}, x, t) \leq 1 \: \: , \: \: \sum_{\bv{a}}\theta(\bv{a}, x, t) = 1 \: \: \forall t, x \in \script{X} \]
This proves (\ref{eq:c1-appa}). 
Likewise, we can rewrite the expression for $\overline{a}_m(t)$ in (\ref{eq:appa3}) as follows: 
\begin{eqnarray*}
\overline{a}_m(t) &=& \sum_{x \in \script{X}} \pi(x) \sum_{\bv{a}} \theta(\bv{a}, x, t)a_m
\end{eqnarray*}
This proves (\ref{eq:c2-appa}). 

To prove (\ref{eq:c3-appa})-(\ref{eq:c4-appa}), note that 
 we can rewrite the expression for $\overline{r}(t)$ in (\ref{eq:appa2})  as follows: 
\begin{eqnarray}
\overline{r}(t) &=& \frac{1}{t}\sum_{\tau=0}^{t-1} \sum_{y \in \script{Y}}\sum_{k=1}^K \pi(y)\expect{(P_k(\tau)-\alpha_k)Z_k(\tau)\tilde{D}_k(\tau)\left|\right. Y(\tau) = y}
 \label{eq:baz0-appa} 
\end{eqnarray}
Now for all $y \in \script{Y}$,  all  vectors $\bv{z} \in \{0,1\}^K$, $\bv{p} \in \script{P}^K$, 
and all slots $t$, 
define $\gamma_k(y, \bv{z}, \bv{p}, t)$ as follows: 
\begin{eqnarray*}
\gamma_k(y, \bv{z}, \bv{p}, t) \defequiv  \left\{ \begin{array}{ll}
                             \frac{\expect{z_k\tilde{D}_k(t)\left|\right. Y(t) = y, \bv{Z}(t) = \bv{z}, \bv{P}(t) = \bv{p}}}{\expect{z_kD_k(t)\left|\right. Y(t) = y, \bv{Z}(t) = \bv{z}, \bv{P}(t) = \bv{p}}}  & \mbox{ if the denominator is non-zero}  \\
0 & \mbox{ otherwise} 
                            \end{array}
                                 \right.
\end{eqnarray*}
Note that $\tilde{D}_k(t) \leq D_k(t)$, and so $0 \leq \gamma_k(y, \bv{z}, \bv{p}, t) \leq 1$.
It follows by definition that: 
\[ \expect{z_k\tilde{D}_k(t)\left|\right.Y(t) = y, \bv{Z}(t) = \bv{z}, \bv{P}(t) = \bv{p}} = \gamma_k(y, \bv{z}, \bv{p}, t)F_k(p_k, y) \]
 Using the above equality  with iterated expectations in 
(\ref{eq:baz0-appa}) yields: 
\begin{eqnarray}
\hspace{-.3in}\overline{r}(t) &=& \sum_{y \in \script{Y}}  \sum_{k=1}^K   \pi(y) \times \nonumber \\
&&  \frac{1}{t}\sum_{\tau=0}^{t-1} \expect{(P_k(\tau)-\alpha_k)
     \gamma_k(y, \bv{Z}(\tau), \bv{P}(\tau), \tau)F_k(P_k(\tau), y) 
\left|\right. Y(\tau) = y}  \label{eq:r-appa}
\end{eqnarray} 
With similar analysis, we can rewrite the expression for $\overline{\mu}_m(t)$ in  (\ref{eq:appa4}) as follows: 
\begin{eqnarray}
\hspace{-.3in} \overline{\mu}_m(t) &=& \sum_{y\in\script{Y}} \sum_{k=1}^K\beta_{mk}\pi(y) \times \nonumber \\
&& \frac{1}{t}\sum_{\tau=0}^{t-1} \expect{   \gamma_k(y, \bv{Z}(\tau), \bv{P}(\tau), \tau)F_k(P_k(\tau), y)          \left|\right. Y(t) = y} \label{eq:d-appa} 
\end{eqnarray}
Now define $\nu_k(y,t)$ and $d_k(y,t)$ as the corresponding time average 
 expectations inside the 
summation terms of (\ref{eq:r-appa}) and (\ref{eq:d-appa}), respectively, so that:
\begin{eqnarray*}
\overline{r}(t) &=& \sum_{y\in\script{Y}}\pi(y)\sum_{k=1}^K  \nu_k(y, t) \\
\overline{\mu}_m(t) &=& \sum_{y\in\script{Y}}\pi(y)\sum_{k=1}^K \beta_{mk}d_k(y,t)
\end{eqnarray*}
Note that the time average expectation  over $t$ slots used in the 
definitions of $\nu_k(y,t)$ and $d_k(y,t)$ can be viewed as an operator that produces
a convex combination.  Specifically, the two-dimensional 
vector $(\nu_k(y,t); d_k(y,t))$ can be viewed as an element of the convex hull
of the following set $\hat{\Omega}_k(y)$:
\[ \hat{\Omega}_k(y) \defequiv \{ (\nu, d) \left|\right. \nu = (p-\alpha_k)\gamma F_k(p, y) \: , \:  d = \gamma F_k(p, y) \: , \:  p \in \script{P} \: , \: 0 \leq \gamma \leq 1   \}   \]
However, it is not difficult to show that the convex hull of the set $\hat{\Omega}_k(y)$ is the 
same as the convex hull of the set $\Omega_k(y)$ defined in (\ref{eq:compact2}).\footnote{This 
can be seen by noting that $\Omega_k(y) \subset \hat{\Omega}_k(y) \subset Conv(\Omega_k(y))$
and then taking convex hulls of this inclusion relation.} 
This proves (\ref{eq:c3-appa}) and (\ref{eq:c4-appa}). 
\end{proof} 

\section*{Appendix B --- Proof of the 2-Price Theorem (Theorem \ref{thm:two-price})} 
\begin{proof} (Theorem \ref{thm:two-price})  The 
 proof follows the work of \cite{two-prices-allerton07}. 
 For each product $k \in \{1, \ldots, K\}$
 and each possible demand state $y \in \script{Y}$, 
  define constants $\hat{r}_k(y)$ and $\hat{d}_{k}(y)$ as follows: 
 \begin{eqnarray*}
 \hat{r}_k(y) &\defequiv& \expect{Z_k(t)(P_k(t)-\alpha_k)F_k(P_k(t), y)\left|\right. Y(t) = y}  \\
 \hat{d}_{k}(y) &\defequiv& \expect{Z_k(t)F_k(P_k(t), y)\left|\right. Y(t) = y} 
 \end{eqnarray*}
 where $\bv{Z}(t) = (Z_1(t), \ldots, Z_K(t))$ and $\bv{P}(t) = (P_1(t), \ldots, P_K(t))$ are 
 the stationary randomized decisions given in the statement Theorem \ref{thm:two-price}. 
Thus, by (\ref{eq:two-price1}) and (\ref{eq:two-price2}): 
\begin{eqnarray}
&\sum_{k=1}^K \sum_{y\in\script{Y}} \pi(y)\hat{r}_k(y) \geq \hat{r}& \label{eq:baz1} \\
&\sum_{k=1}^K \beta_{mk} \sum_{y \in \script{Y}} \pi(y) \hat{d}_{mk}(y) \leq \hat{\mu}_m \: \: \mbox{for all $m \in \{1, \ldots, M\}$}& \label{eq:baz2} 
\end{eqnarray}
Now consider a particular  $k, y$, and 
define the two-dimensional  set $\Omega(k, y)$ as follows: 
\[ \Omega(k,  y) = \{ (r;  d) \left|\right. r = z(p-\alpha_k)F_k(p, y), d = zF_k(p, y), p \in \script{P}, z \in \{0, 1\}\} \]
We now use the fact that if $\bv{C}$ is any general random vector that takes values in a general 
set $\script{C}$, then 
$\expect{\bv{C}}$ is in the convex hull of $\script{C}$ \cite{now}.  Note that for any random 
choice of $Z_k(t) \in \{0, 1\}, P_k(t) \in \script{P}_k$, we have: 
\[ (Z_k(t)(P_k(t) - \alpha_k)F_k(P_k(t), y); Z_k(t) F_k(P_k(t), y)) \in \Omega(k,y) \]
Hence, the conditional expectation of this random vector given $Y(t) = y$, given by 
$(\hat{r}_k(y); \hat{d}_{k}(y))$,  is in the convex hull of $\Omega(k, y)$.  Because $\Omega(k,y)$ is a 
two dimensional set, any element of its convex hull can be expressed as a convex combination that
uses at most three elements  of $\Omega(k,y)$ (by Caratheodory's Theorem \cite{bertsekas-convex}).  Moreover, because
the set $\script{P}$ is compact and $F_k(p, y)$ is a continuous function of 
$p \in \script{P}$ for each $y \in \script{Y}$, the set $\Omega(k, y)$ is compact and
hence any point on the \emph{boundary} of its convex hull  
can be described by a convex combination of at most \emph{two} elements of $\Omega(k,y)$
(see, for example,  \cite{two-prices-allerton07}).  Let $(\hat{r}_k^*(y), \hat{d}_{k}(y))$ be
the boundary point with the largest value of the first entry given the second entry is 
$\hat{d}_{k}(y)$.  We thus have $\hat{r}_k^*(y) \geq \hat{r}_k(y)$, and writing the 
convex combination with two elements we have: 
\begin{eqnarray*}
 (\hat{r}_k^*(y); \hat{d}_{k}(y)) &=&  \eta^{(1)}\left(z^{(1)}(p^{(1)}-\alpha_k)F_k(p^{(1)}, y) ; 
z^{(1)}F_k(p^{(1)}, y)\right) \\
&&  + \eta^{(2)}\left(z^{(2)}(p^{(2)}-\alpha_k)F_k(p^{(2)}, y) ; 
z^{(2)}F_k(p^{(2)}, y)\right)
\end{eqnarray*}
for some set of decisions  $(z^{(1)}, p^{(1)})$ and $(z^{(2)}, p^{(2)})$ (with 
$z^{(i)} \in \{0, 1\}, p^{(i)} \in \script{P}$) and
probabilities $\eta^{(1)}$ and $\eta^{(2)}$ such that $\eta^{(1)} + \eta^{(2)} = 1$.
Note that these $z^{(i)}, p^{(i)}, \eta^{(i)}$ values are determined for a particular
$(k,y)$, and hence we can relabel them as $z_k^{(i)}(y)$, $p_k^{(i)}(y)$, and $\eta_k^{(i)}(y)$
for $i \in \{1, 2\}$. 

Now define the following stationary randomized policy:  For each product $k\in\{1, \ldots, K\}$, 
if $Y(t) = y$, 
independently choose $Z_k^*(t) = z_k^{(1)}(y)$ and $P_k^*(t) = p_k^{(1)}(y)$ with probability 
$\eta_k^{(1)}(y)$, and else choose $Z_k^*(t) = z_k^{(2)}(y)$ and $P_k^*(t) = p_k^{(2)}(y)$. 
It follows that for a given value of $y$,  
this policy uses at most two different prices for each product.\footnote{Further, given the observed
value of $Y(t) = y$, this policy makes pricing
decisions independently for each product $k$.} Further, we have: 
 \begin{eqnarray*}
 \hat{r}_k(y) &\leq& \expect{Z_k^*(t)(P_k^*(t)-\alpha_k)F_k(P_k^*(t), y)\left|\right. Y(t) = y}  \\
 \hat{d}_{k}(y) &=& \expect{Z_k^*(t)F_k(P_k^*(t), y)\left|\right. Y(t) = y} 
 \end{eqnarray*}
Summing these conditional expectations over $k \in \{1, \ldots, K\}$ and $y \in \script{Y}$
and using (\ref{eq:baz1})-(\ref{eq:baz2}) yields the result. 
\end{proof}

\section*{Appendix C --- Proof of Lemma \ref{lemma:sample_path_Tdrift}}
\begin{proof} (Lemma \ref{lemma:sample_path_Tdrift}) 
Using the queueing dynamic equation (\ref{eq:dynamics1}) (which holds because $Q_m(t) \geq \mu_{m,max}$
for all $t$),  it is easy to show: 
\begin{eqnarray*}
\frac{1}{2}\big(Q_m(\tau+1)-\theta_m\big)^2-\frac{1}{2}\big(Q_m(\tau)-\theta_m\big)^2 \\
=  \frac{1}{2}(A_m(t) - \mu_m(t))^2 +(Q_m(\tau)-\theta_m)\big[A_m(\tau)-\mu_m(\tau)\big],
\end{eqnarray*}
Now summing over $m\in\{1, \ldots, M\}$ and adding to both sides the term $-V\phi(\tau)$, we have:
\begin{eqnarray*}
\tilde{\Delta}_1(\tau) -V \phi(\tau)\leq B-V \phi(\tau) + \sum_{m=1}^M(Q_m(\tau)-\theta_m)\big[A_m(\tau)-\mu_m(\tau)\big].
\end{eqnarray*}
where $B$ is defined in (\ref{eq:B}), and $\tilde{\Delta}_1(\tau)\defequiv 
\frac{1}{2}\sum_{m=1}^M\big[\big(Q_m(\tau+1)-\theta_m\big)^2- \big(Q_m(\tau)-\theta_m\big)^2\big]$ is the $1$-step sample path drift of the Lyapunov function at time $\tau$. 
Now for any $t_0\leq\tau\leq t_0+T-1$, we can take expectations over the above equation conditioning on $\hat{H}_T(t_0)$ to get:
\begin{eqnarray}
\hspace{-.1in}&&\expect{\tilde{\Delta}_1(\tau)|\hat{H}_T(t_0)} -V \expect{\phi(\tau)\left.|\right.\hat{H}_T(t_0)}\leq B-V \expect{\phi(\tau)\left.|\right.\hat{H}_T(t_0)} \label{eq:1step_drift_ep}\\
\hspace{-.1in}&&\qquad\qquad\qquad\qquad\qquad+ \sum_{m=1}^M\expect{(Q_m(\tau)-\theta_m)\big[A_m(\tau)-\mu_m(\tau)\big]\left.|\right.\hat{H}_T(t_0)}.\nonumber
\end{eqnarray}
However, using iterated expectations in the last term as in (\ref{eq:foo}), we see that:
\begin{eqnarray*}
\hspace{-.1in}&&\quad\sum_{m=1}^M\expect{(Q_m(\tau)-\theta_m)\big[A_m(\tau)-\mu_m(\tau)\big]\left.|\right.\hat{H}_T(t_0)} \\
\hspace{-.1in}&&= \sum_{m=1}^M\expect{(Q_m(\tau)-\theta_m)\expect{A_m(\tau)-\mu_m(\tau) \left.|\right.\bv{Q}(\tau), \bv{P}(\tau), \bv{Z}(\tau), \hat{H}_T(t_0)}\left.|\right.\hat{H}_T(t_0)}\\
\hspace{-.1in}&&= \sum_{m=1}^M\expect{(Q_m(\tau)-\theta_m)\big[A_m(\tau)- \sum_{k=1}^K\beta_{mk}Z_k(\tau)F_k(P_k(\tau), Y(\tau))\big] \left.|\right.\hat{H}_T(t_0)}
\end{eqnarray*}
Plugging this back into (\ref{eq:1step_drift_ep}), we get:
\begin{eqnarray}
\hspace{-.1in}&&\expect{\tilde{\Delta}_1(\tau)|\hat{H}_T(t_0)} -V \expect{\phi(\tau)\left.|\right.\hat{H}_T(t_0)}\leq B-V \expect{\phi(\tau)\left.|\right.\hat{H}_T(t_0)} \label{eq:1step_drift_ep2}\\
\hspace{-.1in}&& + \sum_{m=1}^M\expect{(Q_m(\tau)-\theta_m)\big[A_m(\tau)-\sum_{k=1}^K\beta_{mk}Z_k(\tau)F_k(P_k(\tau), Y(\tau))\big]\left.|\right.\hat{H}_T(t_0)}.\nonumber
\end{eqnarray}
Now since, given the $\bv{Q}(\tau)$ values on slot $\tau$,  
the JPP algorithm minimizes the right hand side of the above equation at time $\tau$, we indeed have:
\begin{eqnarray}
\hspace{-.1in}&&\expect{\tilde{\Delta}_1(\tau)|\hat{H}_T(t_0)} -V \expect{\phi(\tau)\left.|\right.\hat{H}_T(t_0)}\leq B-V \expect{\phi^*(\tau)\left.|\right.\hat{H}_T(t_0)} \label{eq:1step_drift_ep3}\\
\hspace{-.1in}&& + \sum_{m=1}^M\expect{(Q_m(\tau)-\theta_m)\big[A^*_m(\tau)-\sum_{k=1}^K\beta_{mk}Z^*_k(\tau)F_k(P^*_k(\tau), Y(\tau))\big]\left.|\right.\hat{H}_T(t_0)},\nonumber
\end{eqnarray}
where $\phi^*(\tau), A^*_m(\tau)$ $Z_k^*(\tau)$ and $P^*_k(\tau)$ are variables generated by any other policies. Summing (\ref{eq:1step_drift_ep3}) from $\tau=t_0$ to $\tau=t_0+T-1$, we thus have:
\begin{eqnarray}
\hspace{-.1in}&&\hat{\Delta}_T(t_0) -\sum_{\tau=t_0}^{t_0+T-1}V \expect{\phi(\tau)\left.|\right.\hat{H}_T(t_0)}\leq TB-V\sum_{\tau=t_0}^{t_0+T-1} \expect{\phi^*(\tau)\left.|\right.\hat{H}_T(t_0)} \label{eq:1step_drift_ep4}\\
\hspace{-.1in}&& + \sum_{m=1}^M\sum_{\tau=t_0}^{t_0+T-1}\expect{(Q_m(\tau)-\theta_m)\big[A^*_m(\tau)-\sum_{k=1}^K\beta_{mk}Z^*_k(\tau)F_k(P^*_k(\tau), Y(\tau))\big]\left.|\right.\hat{H}_T(t_0)}.\nonumber
\end{eqnarray}
Now using the fact that for any $t$, $|Q_m(t+\tau)-Q_m(t)|\leq \tau \max[A_{m, max}, \mu_{m,max}]$, we get:
\begin{eqnarray*}
\hspace{-.1in}&&\sum_{m=1}^M\sum_{\tau=t_0}^{t_0+T-1}(Q_m(\tau)-\theta_m)\big[A^*_m(\tau)-\sum_{k=1}^K\beta_{mk}Z^*_k(\tau)F_k(P^*_k(\tau), Y(\tau))\big]\\
\hspace{-.1in}&&\leq B' 
+\sum_{m=1}^M\sum_{\tau=t_0}^{t_0+T-1}(Q_m(t_0)-\theta_m)\big[A^*_m(\tau)-\sum_{k=1}^K\beta_{mk}Z^*_k(\tau)F_k(P^*_k(\tau), Y(\tau))\big]
\end{eqnarray*}
where: 
 \[ B' \defequiv \frac{T(T-1)}{2}\sum_{m=1}^M\max[A_{m,max}^2, \mu_{m,max}^2] = T(T-1)B\]
 where $B$ is defined in (\ref{eq:B}). 
Plugging this into (\ref{eq:1step_drift_ep4}) and using the fact that conditioning on $\hat{H}_T(t_0)$, $\bv{Q}(t_0)$ is a constant, we  get:
\begin{eqnarray*}
\hspace{-.2in}&&\hat{\Delta}_T(t_0)-V \sum_{\tau=t_0}^{t_0+T-1}\expect{\phi(\tau)\left.|\right. \hat{H}_T(t_0)}\leq 
TB + B' -V \sum_{\tau=t_0}^{t_0+T-1}\expect{\phi^*(\tau)\left.|\right. \hat{H}_T(t_0)} \\
\hspace{-.2in}&&  +\sum_{m=1}^M\big(Q_m(t_0)-\theta_m\big)\sum_{\tau=t_0}^{t_0+T-1}\expect{A^*_m(\tau)-\sum_{k=1}^K\beta_{mk}Z^*_k(\tau)F_k(P^*_k(\tau), Y(\tau))\left.|\right. \hat{H}_T(t_0)},\nonumber
\end{eqnarray*}
Noting that $TB + B' = T^2B$ proves Lemma  \ref{lemma:sample_path_Tdrift}. 
\end{proof}

\bibliographystyle{../../../../acm-latex/acmtrans}
\bibliography{../../../../latex-mit/bibliography/refs}

\begin{thebibliography}{}

\bibitem[\protect\citeauthoryear{Aviv and Pazgal}{Aviv and
  Pazgal}{2002}]{pricing-short-life-cycles}
{\sc Aviv, Y.} {\sc and} {\sc Pazgal, A.} October 2002.
\newblock Pricing of short life-cycle products through active learning.
\newblock {\em working paper\/}.

\bibitem[\protect\citeauthoryear{Benjaafar and ElHafsi}{Benjaafar and
  ElHafsi}{2006}]{assemble-to-order-dp}
{\sc Benjaafar, S.} {\sc and} {\sc ElHafsi, M.} December 2006.
\newblock Production and inventory control of a single product
  assemble-to-order system with multiple customer classes.
\newblock {\em Management Science\/}~{\em vol. 52, no. 12, pp. 1896-1912}.

\bibitem[\protect\citeauthoryear{Bertsekas}{Bertsekas}{1995}]{bertsekas-dp}
{\sc Bertsekas, D.~P.} 1995.
\newblock {\em Dynamic Programming and Optimal Control, vols. 1 and 2}.
\newblock Athena Scientific, Belmont, Mass.

\bibitem[\protect\citeauthoryear{Bertsekas, Nedic, and Ozdaglar}{Bertsekas
  et~al\mbox{.}}{2003}]{bertsekas-convex}
{\sc Bertsekas, D.~P.}, {\sc Nedic, A.}, {\sc and} {\sc Ozdaglar, A.~E.} 2003.
\newblock {\em Convex Analysis and Optimization}.
\newblock Boston: Athena Scientific.

\bibitem[\protect\citeauthoryear{Bertsekas and Tsitsiklis}{Bertsekas and
  Tsitsiklis}{1996}]{bertsekas-neural}
{\sc Bertsekas, D.~P.} {\sc and} {\sc Tsitsiklis, J.~N.} 1996.
\newblock {\em Neuro-Dynamic Programming}.
\newblock Athena Scientific, Belmont, Mass.

\bibitem[\protect\citeauthoryear{Billingsley}{Billingsley}{1986}]{billingsley}
{\sc Billingsley, P.} 1986.
\newblock {\em Probability Theory and Measure, 2nd edition}.
\newblock New York: John Wiley \& Sons.

\bibitem[\protect\citeauthoryear{Georgiadis, Neely, and Tassiulas}{Georgiadis
  et~al\mbox{.}}{2006}]{now}
{\sc Georgiadis, L.}, {\sc Neely, M.~J.}, {\sc and} {\sc Tassiulas, L.} 2006.
\newblock Resource allocation and cross-layer control in wireless networks.
\newblock {\em Foundations and Trends in Networking\/}~{\em vol. 1, no. 1, pp.
  1-149}.

\bibitem[\protect\citeauthoryear{Huang and Neely}{Huang and
  Neely}{2007}]{two-prices-allerton07}
{\sc Huang, L.} {\sc and} {\sc Neely, M.~J.} September 2007.
\newblock The optimality of two prices: Maximizing revenue in a stochastic
  network.
\newblock {\em Proc. 45th Annual Allerton Conf. on Communication, Control, and
  Computing\/}.

\bibitem[\protect\citeauthoryear{Jiang and Walrand}{Jiang and
  Walrand}{2009}]{jiang-processing}
{\sc Jiang, L.} {\sc and} {\sc Walrand, J.} 2009.
\newblock Stable and utility-maximizing scheduling for stochastic processing
  networks.
\newblock {\em Allerton Conference on Communication, Control, and Computing\/}.

\bibitem[\protect\citeauthoryear{Neely}{Neely}{2006b}]{neely-utility-delay-jsa%
c}
{\sc Neely, M.~J.} Aug. 2006b.
\newblock Super-fast delay tradeoffs for utility optimal fair scheduling in
  wireless networks.
\newblock {\em IEEE Journal on Selected Areas in Communications, Special Issue
  on Nonlinear Optimization of Communication Systems\/}~{\em vol. 24, no. 8,
  pp. 1489-1501}.

\bibitem[\protect\citeauthoryear{Neely}{Neely}{2010}]{neely-universal-scheduli%
ng}
{\sc Neely, M.~J.} Jan. 2010.
\newblock Universal scheduling for networks with arbitrary traffic, channels,
  and mobility.
\newblock {\em ArXiv technical report\/}~{\em arXiv:1001.0960v1}.

\bibitem[\protect\citeauthoryear{Neely}{Neely}{2006a}]{neely-energy-it}
{\sc Neely, M.~J.} July 2006a.
\newblock Energy optimal control for time varying wireless networks.
\newblock {\em IEEE Transactions on Information Theory\/}~{\em vol. 52, no. 7,
  pp. 2915-2934}.

\bibitem[\protect\citeauthoryear{Neely}{Neely}{2007}]{neely-energy-delay-it}
{\sc Neely, M.~J.} Sept. 2007.
\newblock Optimal energy and delay tradeoffs for multi-user wireless downlinks.
\newblock {\em IEEE Transactions on Information Theory\/}~{\em vol. 53, no. 9,
  pp. 3095-3113}.

\bibitem[\protect\citeauthoryear{Neely}{Neely}{2009}]{neely-stock-arxiv}
{\sc Neely, M.~J.} Sept. 2009.
\newblock Stock market trading via stochastic network optimization.
\newblock {\em ArXiv Technical Report\/}~{\em arXiv:0909.3891v1}.

\bibitem[\protect\citeauthoryear{Neely, Modiano, and Li}{Neely
  et~al\mbox{.}}{2005}]{neely-fairness-infocom05}
{\sc Neely, M.~J.}, {\sc Modiano, E.}, {\sc and} {\sc Li, C.} March 2005.
\newblock Fairness and optimal stochastic control for heterogeneous networks.
\newblock {\em Proc. IEEE INFOCOM\/}.

\bibitem[\protect\citeauthoryear{Neely and Urgaonkar}{Neely and
  Urgaonkar}{2008}]{neely-asilomar08}
{\sc Neely, M.~J.} {\sc and} {\sc Urgaonkar, R.} Oct. 2008.
\newblock Opportunism, backpressure, and stochastic optimization with the
  wireless broadcast advantage.
\newblock {\em Asilomar Conference on Signals, Systems, and Computers, Pacific
  Grove, CA\/}.

\bibitem[\protect\citeauthoryear{Plambeck and Ward}{Plambeck and
  Ward}{2006}]{assemble-to-order-plambeck06}
{\sc Plambeck, E.~L.} {\sc and} {\sc Ward, A.~R.} August 2006.
\newblock Optimal control of a high-volume assemble-to-order system.
\newblock {\em Mathematics of Operations Research\/}~{\em vol. 31, no. 3, pp.
  453-477}.

\bibitem[\protect\citeauthoryear{Powell}{Powell}{2007}]{approx-dp}
{\sc Powell, W.~B.} 2007.
\newblock {\em Approximate Dynamic Programming: Solving the Curses of
  Dimensionality}.
\newblock John Wiley \& Sons.

\bibitem[\protect\citeauthoryear{Roy, Bertsekas, Lee, and Tsitsiklis}{Roy
  et~al\mbox{.}}{1997}]{ndp-inventory}
{\sc Roy, B.~V.}, {\sc Bertsekas, D.~P.}, {\sc Lee, Y.}, {\sc and} {\sc
  Tsitsiklis, J.~N.} Dec. 1997.
\newblock A neuro-dynamic programming approach to retailer inventory
  management.
\newblock {\em Proc. of 36th Conf. on Decision and Control, San Diego\/}.

\bibitem[\protect\citeauthoryear{Urgaonkar and Neely}{Urgaonkar and
  Neely}{2009}]{rahul-cognitive-tmc}
{\sc Urgaonkar, R.} {\sc and} {\sc Neely, M.~J.} June 2009.
\newblock Opportunistic scheduling with reliability guarantees in cognitive
  radio networks.
\newblock {\em IEEE Transactions on Mobile Computing\/}~{\em vol. 8, no. 6, pp.
  766-777}.

\end{thebibliography}
\end{document}